\definecolor{dblue}{rgb}{0,0,0.45}
\definecolor{red}{rgb}{0.7,0,0}
\newtheorem{theorem}{Theorem}[section]
\newtheorem{lemma}[theorem]{Lemma}
\newtheorem{proposition}[theorem]{Proposition}
\theoremstyle{definition}
\newtheorem{example}[theorem]{Example}
\theoremstyle{remark}
\numberwithin{equation}{section}
\def\N{\mathbb{N}}
\def\w{\mathbb{\omega}}
\def\Z{\mathbb{Z}}
\def\R{\mathbb{R}}
\begin{document}

\title{Discrete Morrey Spaces and Their Inclusion Properties}
\author{Hendra Gunawan$^1$, Eder Kikianty$^2$, and Christopher Schwanke$^3$}
\affil{$^1$Department of Mathematics, Bandung Institute of Technology\\
Jalan Ganesha No. 10, Bandung 40132, Indonesia\\E-mail:~{\rm hgunawan@math.itb.ac.id}}
\affil{$^2$Department of Mathematics and Applied Mathematics, University of Pretoria\\
Private Bag X20, Hatfield 0028, South Africa\\E-mail:~{\rm eder.kikianty@up.ac.za}}
\affil{$^3$Unit for BMI, North-West University\\Private Bag X6001, Potchefstroom 2520,
South Africa\\E-mail:~{\rm schwankc326@gmail.com}}
\date{}

\maketitle

\begin{abstract}
We discuss discrete Morrey spaces and their generalizations, and we prove
necessary and sufficient conditions for the inclusion property among
these spaces through an estimate for the characteristic sequences.

\bigskip

\noindent{\bf Keywords}: Discrete Morrey spaces, inclusion properties

\noindent{\bf Mathematics Subject Classification}: 42B35, 46B45, 46A45
\end{abstract}

\section{Introduction}

Many operators that are initially studied on Lebesgue spaces $L^p(\mathbb{R}^d)$
have discrete analogues on $\ell^p(\mathbb{Z}^d)$, see for examples
\cite{Kov, MSW, Ob, SW1, SW2, SW3, SW4}.
Some of these operators have also been studied on `continuous' Morrey spaces
$M^p_q(\mathbb{R}^d)$, see for examples \cite{Adams, Chiarenza, Gunawan, Nakai,
Sawano}. In this paper, we are interested in studying discrete analogues of
Morrey spaces and their generalizations. In particular, we discuss the inclusion
property of these spaces and prove some necessary and sufficient conditions for
this property. For related works on the continuous version, see \cite{GHLM, Os, Raf}.

Let $m\in\mathbb{Z}$, $N\in \omega:=\mathbb{N}\cup\{0\}$, and write $S_{m,N}:=
\{m-N, \dots, m,\dots, m+N\}$. Then $|S_{m,N}|=2N+1$ --- the cardinality of $S_{m,N}$.
(It is independent of $m$, but we write it in the subscript to keep track of the set.)
Let $\mathbb{K}$ be $\mathbb{R}$ or $\mathbb{C}$ and $1\leq p\leq q<\infty$.
We denote by $\ell^p_q=\ell^p_q(\mathbb{Z})$ the set of sequences $x=(x_k)_{k\in\mathbb{Z}}$
taking values in $\mathbb{K}$ such that
$$\|x\|_{\ell^p_q}:=\sup_{m\in\mathbb{Z},N\in\omega} |S_{m,N}|^{\frac1q-\frac1p}
\left(\sum_{k\in S_{m,N}} |x_k|^p\right)^\frac1p<\infty.$$
Clearly $\ell^p_q$ is a vector space, which we shall call a {\it discrete Morrey space}.
We remark that when $p=q$, we have $\ell^p_p = \ell^p$, the space of $p$-summable
sequences with integer indices. For a sequence $x$ to be in $\ell^p_q$, $x$ has
to have some decay, but not as fast as those in $\ell^p$. In general, for $p<q$,
$\ell^p_q$ is a larger space than $\ell^p$, as we shall see below.

In the following sections, we shall discuss (i) discrete Morrey spaces, (ii) weak
type discrete Morrey spaces, (iii) generalized discrete Morrey spaces, and
(iv) generalized weak type discrete Morrey spaces. In each section, we prove the
inclusion relations between the spaces. In particular, in the last two sections, we prove
necessary and sufficient conditions for the inclusion property among the generalized discrete
Morrey spaces and also among the generalized weak type discrete Morrey spaces. Our main
results are presented in Theorems \ref{strongequivalence} and \ref{weakequivalence}.

\section{Discrete Morrey Spaces}

We begin with the following proposition, which tells us that the discrete Morrey space
$\ell^p_q$ contains $\ell^p$, for $p\le q$.

\bigskip

\begin{proposition}
For $1\leq p\leq q <\infty$, we have $\ell^p \subseteq \ell^p_q$ and $\| x\|_{\ell^p_q}
\leq \|x\|_{\ell^p}$ for every $x\in \ell^p$. For $1\le p<q<\infty$, the inclusion is strict.
\end{proposition}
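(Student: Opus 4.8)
The plan is to dispatch the inclusion with its norm bound first, then construct an explicit separating sequence to establish strictness.

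For the inclusion $\ell^p \subseteq \ell^p_q$, I would work directly from the defining supremum. Since $p \leq q$ forces $\frac1q - \frac1p \leq 0$ while every $|S_{m,N}| = 2N+1 \geq 1$, the prefactor satisfies $|S_{m,N}|^{1/q - 1/p} \leq 1$. Bounding each partial sum by the full sum, $\sum_{k \in S_{m,N}} |x_k|^p \leq \sum_{k \in \mathbb{Z}} |x_k|^p = \|x\|_{\ell^p}^p$, every term appearing in the supremum is at most $\|x\|_{\ell^p}$. Taking the supremum over all $m$ and $N$ yields $\|x\|_{\ell^p_q} \leq \|x\|_{\ell^p}$, and in particular $x \in \ell^p_q$.

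For strictness in the regime $p < q$, I would exhibit a sequence lying in $\ell^p_q \setminus \ell^p$. The natural candidate is $x_k := (|k|+1)^{-1/q}$. Since $p < q$ gives $p/q < 1$, the series $\sum_k (|k|+1)^{-p/q}$ diverges, so $x \notin \ell^p$. To place $x$ in $\ell^p_q$, I would bound, for each fixed $N$, the window sum $\sum_{k \in S_{m,N}} (|k|+1)^{-p/q}$ by its value at the centered window $m=0$. This reduction is justified because the summand is even and non-increasing in $|k|$, so a window of $2N+1$ consecutive integers collects the largest available terms exactly when it equals $\{-N, \dots, N\}$. A comparison of the centered sum with the integral $\int_0^N (t+1)^{-p/q}\,dt$ then yields $\sum_{k=-N}^N (|k|+1)^{-p/q} \leq C\,(N+1)^{1-p/q}$ for some constant $C = C(p,q)$.

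The crux is the exponent bookkeeping. Inserting this estimate into the defining quantity, the total power of $N+1$ is
$$\left(\frac1q - \frac1p\right) + \frac1p\left(1 - \frac pq\right) = 0,$$
so the expression is uniformly bounded in $N$ (and in $m$, by the centering reduction), giving $\|x\|_{\ell^p_q} < \infty$. This exact cancellation of the exponent is the heart of the argument and is precisely what singles out the regime $p < q$; I expect it to demand the most care, both in the rearrangement step that reduces to the centered window and in tracking the constant through the integral comparison.
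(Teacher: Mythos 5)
Your proposal is correct and follows essentially the same route as the paper: the same prefactor-plus-tail bound for the inclusion, and for strictness the same power-type sequence (your $(|k|+1)^{-1/q}$ is just a shift of the paper's $|k|^{-1/q}$ with $x_0:=1$), with the same reduction to a centered window, integral comparison, and exponent cancellation $(\frac1q-\frac1p)+\frac1p(1-\frac pq)=0$. The only cosmetic difference is that the paper sidesteps the rearrangement justification by enlarging to the window $S_{0,N+1}$, whereas you argue directly that the centered window maximizes the sum; both are fine.
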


\begin{proof}
Let $x=(x_k)_{k\in\Z}\in\ell^p$. We have for all $m\in\mathbb{Z}$ and $N\in \omega$,
$0< |S_{m,N}|^{\frac1q-\frac1p}\leq 1$, and thus
$$|S_{m,N}|^{\frac1q-\frac1p} \left(\sum_{k\in S_{m,N}} |x_k|^p\right)^\frac1p \leq
\left(\sum_{k\in S_{m,N}} |x_k|^p\right)^\frac1p.$$
Taking the supremum over $m\in\mathbb{Z}$ and $N\in \omega$, we get $\| x\|_{\ell^p_q}
\leq \|x\|_{\ell^p}.$

To show that the inclusion is strict for $1\leq p<q<\infty$, consider the sequence
$x=(x_k)_{k\in \mathbb{Z}}$ given
by $x_k:=|k|^{-\frac1q}$ when $k \neq 0$ and $x_0:=1$. Since $\frac{p}{q}<1$, the series
$$\sum_{k\in \mathbb{Z}} |x_k|^p=1+2\sum_{k=1}^\infty k^{-\frac{p}{q}}$$
is divergent, thus $x \not\in \ell^p(\mathbb{Z})$. Next, for all $m\in\mathbb{Z}$
and $N\in \omega$, we have
$$\sum_{k\in S_{m,N}} |x_k|^p \le \sum_{k\in S_{0,N+1}} |x_k|^p
= 1 + 2 \sum_{k=1}^{N+1} k^{-\frac{p}{q}}.$$
Using the lower Riemann sum of $\int\limits_{1}^{N+1} {x^{-\frac{p}{q}}}\,dx$, we see that
$$
\sum_{k=1}^{N+1} k^{-\frac{p}{q}}\le 1+\int_{1}^{N+1} x^{-\frac{p}{q}} dx=
1-\frac{q}{q-p}+\frac{q}{q-p}(N+1)^{1-\frac pq}.
$$
Since $(2N+1)^{\frac{p}{q}-1}\le 1$ and $N+1\le 2N+1$, we have
\[
|S_{m,N}|^{\frac pq-1}\sum_{k\in S_{m,N}} |x_k|^p \leq (2N+1)^{\frac{p}{q}-1}
\left(3-\frac{2q}{q-p}+\frac{2q}{q-p}(N+1)^{1-\frac pq}\right)
\le 3+\frac{2q}{q-p}.
\]
Taking the $p$-th roots and then the supremum over all $m\in \mathbb{Z}$ and $N\in\omega$,
we obtain
\[
\sup_{m\in\mathbb{Z},N\in\omega} |S_{m,N}|^{\frac 1q-\frac1p}\left(\sum_{k\in S_{m,N}}
|x_k|^p \right)^{\frac1p}\le \left(3+\frac{2q}{q-p}\right)^\frac1p,
\]
and thus $x\in \ell^p_q$.
\end{proof}

\bigskip

\begin{proposition}\label{prop:2-2}
For $1\leq p\leq q <\infty$, the mapping $\|\cdot\|_{\ell^p_q}$ defines a
norm on ${\ell^p_q}$. Moreover, $(\ell^p_q,\|\cdot\|_{\ell^p_q})$ is a Banach
space.
\end{proposition}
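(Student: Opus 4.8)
The plan is to separate the two assertions: first I verify that $\|\cdot\|_{\ell^p_q}$ satisfies the norm axioms, and then establish completeness. Positivity and the triangle inequality are the substantive points among the axioms, but only the latter requires real work. For definiteness, I would exploit the special choice $N=0$: since $|S_{m,0}|=1$, the defining supremum contains the term $|S_{m,0}|^{\frac1q-\frac1p}|x_m| = |x_m|$, so $\|x\|_{\ell^p_q}\ge |x_m|$ for every $m\in\mathbb{Z}$, and hence $\|x\|_{\ell^p_q}=0$ forces all coordinates to vanish. Absolute homogeneity is immediate by factoring $|\lambda|$ out of each inner sum. For the triangle inequality, I would fix $m$ and $N$, apply Minkowski's inequality on the finite index set $S_{m,N}$ to bound $\bigl(\sum_{k\in S_{m,N}}|x_k+y_k|^p\bigr)^{1/p}$ by the sum of the two separate $p$-norms, multiply through by the positive factor $|S_{m,N}|^{\frac1q-\frac1p}$, dominate each summand by $\|x\|_{\ell^p_q}$ and $\|y\|_{\ell^p_q}$ respectively, and only then take the supremum over $m$ and $N$.

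For completeness, I would take a Cauchy sequence $(x^{(n)})_n$ in $\ell^p_q$ and first produce a candidate limit coordinatewise. The inequality $\|x\|_{\ell^p_q}\ge|x_m|$ noted above, applied to the differences, shows that for each fixed $k$ the scalar sequence $(x^{(n)}_k)_n$ is Cauchy in $\mathbb{K}$; since $\mathbb{K}$ is complete, I set $x_k:=\lim_n x^{(n)}_k$ and $x:=(x_k)_{k\in\mathbb{Z}}$. It then remains to show that $x\in\ell^p_q$ and that $x^{(n)}\to x$ in the $\ell^p_q$-norm.

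Given $\varepsilon>0$, choose $M$ so that $\|x^{(n)}-x^{(l)}\|_{\ell^p_q}<\varepsilon$ for all $n,l\ge M$, which gives, for every fixed $m$ and $N$,
$$|S_{m,N}|^{\frac1q-\frac1p}\left(\sum_{k\in S_{m,N}}|x^{(n)}_k-x^{(l)}_k|^p\right)^{\frac1p}<\varepsilon.$$
Because $S_{m,N}$ is a \emph{finite} set, I can let $l\to\infty$ inside this finite sum and pass the coordinatewise limits through, obtaining $|S_{m,N}|^{\frac1q-\frac1p}\bigl(\sum_{k\in S_{m,N}}|x^{(n)}_k-x_k|^p\bigr)^{1/p}\le\varepsilon$ for every $m$, every $N$, and every $n\ge M$. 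Taking the supremum over $m$ and $N$ then yields $\|x^{(n)}-x\|_{\ell^p_q}\le\varepsilon$ for $n\ge M$. This simultaneously shows $x^{(n)}-x\in\ell^p_q$, whence $x=x^{(n)}-(x^{(n)}-x)$ lies in the vector space $\ell^p_q$, and that $x^{(n)}\to x$ in norm.

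I expect the main obstacle to be the supremum over the infinite index set $\{(m,N)\}$: one cannot directly interchange the limit in $l$ with this supremum. The crux of the argument is to fix $(m,N)$ \emph{first} — so that only a finite sum is involved and the coordinatewise limits pass through harmlessly — derive the $\varepsilon$-estimate uniformly in $(m,N)$, and only afterwards take the supremum. This fix-then-sup device is the standard technique for proving completeness of supremum-type norms, and it is the one delicate point in an otherwise routine verification.
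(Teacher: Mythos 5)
Your proposal is correct and follows essentially the same route as the paper's proof: Minkowski's inequality on each finite block $S_{m,N}$ for the triangle inequality, the $N=0$ specialization to get coordinatewise Cauchy behaviour, a coordinatewise limit as candidate, and the decomposition $x = x^{(n)} - (x^{(n)}-x)$ to conclude membership in $\ell^p_q$. If anything, your treatment of the fix-$(m,N)$-first, pass-the-limit-through-the-finite-sum, then-take-supremum step is spelled out more carefully than in the paper, which compresses it into ``letting $j\to\infty$'' in the supremum inequality.
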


\begin{proof}
It is easy to see that $\|x\|_{\ell^p_q} \geq 0$ for every $x \in \ell^p_q$, and
that $\|x\|_{\ell^p_q}=0$ iff $x=0$.
Next, we have $\|\alpha x\|_{\ell^p_q}=|\alpha|\|x\|_{\ell^p_q}$ for every $x\in \ell^p_q$
and $\alpha \in \mathbb{K}$.
Now, let $x=(x_k)_{k\in\Z}, y=(y_k)_{k\in\Z} \in \ell^p_q$, $m\in\mathbb{Z}$, and $N\in\omega$.
By using Minkowski's inequality, we have
\[
|S_{m,N}|^{\frac1q-\frac1p} \left(\sum_{k\in S_{m,N}} |x_k+y_k|^p\right)^\frac1p
\leq |S_{m,N}|^{\frac1q-\frac1p} \left(\sum_{k\in S_{m,N}} |x_k|^p\right)^\frac1p+
|S_{m,N}|^{\frac1q-\frac1p} \left(\sum_{k\in S_{m,N}} |y_k|^p\right)^\frac1p.
\]
Taking the supremum over $m\in\mathbb{Z}$ and $N\in \omega$, we get $\|x+y\|_{\ell^p_q}
\leq \|x\|_{\ell^p_q}+\|y\|_{\ell^p_q}$. All these show that $\|\cdot\|_{\ell^p_q}$ defines
a norm on $\ell^p_q$.

We shall now show that $(\ell^p_q,\|\cdot\|_{\ell^p_q})$ is a Banach space.
Let $\varepsilon >0$ and $(x^{(n)})_{n\in\mathbb{N}}$ be a Cauchy sequence in $\ell^p_q$.
Then there exists $n_\varepsilon \in \omega$ such that
\begin{equation}\label{eq:cauchy-seq}
\sup_{m\in\mathbb{Z},N\in\omega} |S_{m,N}|^{\frac1q-\frac1p} \left(\sum_{k\in S_{m,N}}
|x^{(i)}_k-x^{(j)}_k|^p \right)^\frac1p<\varepsilon,
\end{equation}
for $i,j \geq n_{\varepsilon}$.
Consequently, for every $m\in\mathbb{Z}$ and $N\in \omega$, we have
$$|S_{m,N}|^{\frac1q-\frac1p}\left(\sum_{k\in S_{m,N}} \big|x^{(i)}_k-x^{(j)}_k\big|^p
\right)^\frac1p <\varepsilon,
$$
for $i,j \geq n_\varepsilon$. By taking $N=0$, we obtain for each $k\in\mathbb{Z}$
$$
\big|x^{(i)}_k-x^{(j)}_k\big|<\varepsilon,
$$
for $i,j \geq n_\varepsilon$. Thus $(x^{(n)}_k)_{n\in\mathbb{N}}$ is a Cauchy sequence in $\mathbb{K}$
for each $k\in \mathbb{Z}$. Define $x:=(x_k)_{k\in\mathbb{Z}}$ where
$$x_k :=\lim_{n\rightarrow \infty} x^{(n)}_k,\quad k\in\mathbb{Z}.$$
If we let $j\rightarrow \infty$ in \eqref{eq:cauchy-seq}, then we have
$$\sup_{m\in\mathbb{Z},N\in\omega} |S_{m,N}|^{\frac1q-\frac1p} \left(\sum_{k\in S_{m,N}}
|x^{(i)}_k-x_k|^p\right)^\frac1p <\varepsilon,
$$
for $i \geq n_\varepsilon$. Hence $x=x^{(i)}-(x^{(i)}-x)$ is in $\ell^p_q$ and the above result shows that
$x^{(i)} \to x$ as $i \rightarrow \infty$.
\end{proof}

The following lemma will be useful in studying the relation between two discrete Morrey
spaces.

\bigskip

\begin{lemma}\label{lemma:monotonicity}
For all $1\leq p_1\leq p_2 < \infty$, $m\in\mathbb{Z}$, and $N\in \omega$, we have
$$
 \left(\frac1{|S_{m,N}|}\sum_{k\in S_{m,N}} |x_k|^{p_1}\right)^\frac1{p_1} \leq
 \left(\frac1{|S_{m,N}|}\sum_{k\in S_{m,N}} |x_k|^{p_2}\right)^\frac1{p_2},
$$
where $x_k\in \mathbb{K}$ for all $k\in S_{m,N}$.
\end{lemma}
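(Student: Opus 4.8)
The plan is to recognize the claimed inequality as the monotonicity in $p$ of the power means taken with respect to the normalized counting measure on $S_{m,N}$, and to establish it by a single application of Hölder's inequality. (Jensen's inequality applied to the convex function $t\mapsto t^{p_2/p_1}$ would work equally well, but Hölder keeps the exponent bookkeeping most transparent.) First I would dispose of the trivial case $p_1=p_2$, in which the two sides literally coincide, and then assume $1\le p_1<p_2<\infty$. I would also note that if all the $x_k$ vanish the inequality is immediate, so there is nothing to worry about with degenerate terms.

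Next, set $r:=p_2/p_1>1$ with conjugate exponent $r':=p_2/(p_2-p_1)$, so that $\tfrac1r+\tfrac1{r'}=1$. I would apply Hölder's inequality to the sum $\sum_{k\in S_{m,N}}|x_k|^{p_1}\cdot 1$, treating $|x_k|^{p_1}$ as one factor and the constant $1$ as the other. Since $\bigl(|x_k|^{p_1}\bigr)^r=|x_k|^{p_2}$ and $\sum_{k\in S_{m,N}}1^{r'}=|S_{m,N}|$, this gives
$$
\sum_{k\in S_{m,N}}|x_k|^{p_1}\le\left(\sum_{k\in S_{m,N}}|x_k|^{p_2}\right)^{1/r}|S_{m,N}|^{1-1/r}.
$$
I would then divide both sides by $|S_{m,N}|$, rewrite the power of $|S_{m,N}|$ as $|S_{m,N}|^{1-1/r}/|S_{m,N}|=|S_{m,N}|^{-1/r}$, and absorb that factor into the bracket to obtain
$$
\frac1{|S_{m,N}|}\sum_{k\in S_{m,N}}|x_k|^{p_1}\le\left(\frac1{|S_{m,N}|}\sum_{k\in S_{m,N}}|x_k|^{p_2}\right)^{1/r}.
$$
Finally, raising both sides to the power $1/p_1$ and using the identity $\tfrac1{rp_1}=\tfrac1{p_2}$ yields exactly the asserted inequality.

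Honestly, there is no deep obstacle here; the result is the standard fact that normalized $\ell^p$-means increase with $p$ on a finite set, and the finiteness of $S_{m,N}$ (from $|S_{m,N}|=2N+1$) means no convergence issues arise. The only point requiring genuine care is the exponent bookkeeping: correctly identifying the conjugate exponent $r'=p_2/(p_2-p_1)$, checking that the power of $|S_{m,N}|$ collapses to $|S_{m,N}|^{-1/r}$ so that it tucks cleanly inside the normalized average, and confirming that the final exponent $1/(rp_1)$ really equals $1/p_2$. I would double-check each of these algebraic identities explicitly rather than trust them by inspection.
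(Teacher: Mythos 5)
Your proposal is correct and follows essentially the same route as the paper: a single application of H\"older's inequality with conjugate exponents $p_2/p_1$ and $p_2/(p_2-p_1)$ to the sum $\sum_{k\in S_{m,N}}|x_k|^{p_1}\cdot 1$, followed by normalizing by $|S_{m,N}|$ and raising to the power $1/p_1$. Your explicit handling of the case $p_1=p_2$ and the final exponent bookkeeping are fine points the paper leaves implicit, but the argument is the same.
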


\begin{proof}
Let $1\leq p_1\leq p_2 < \infty$, $m\in\mathbb{Z}$, and $N\in \omega$.
By H\"older's inequality, we have
\begin{align*}
\sum_{k\in S_{m,N}} |x_k|^{p_1} &\leq \left(\sum_{k\in S_{m,N}} |x_k|^{p_2}
\right)^\frac{p_1}{p_2} \left(\sum_{k\in S_{m,N}} 1\right)^{1-\frac{p_1}{p_2}}\\
&= |S_{m,N}|^{1-\frac{p_1}{p_2}} \left(\sum_{k\in S_{m,N}} |x_k|^{p_2}
\right)^\frac{p_1}{p_2}\\
&= |S_{m,N}|\left(\frac{1}{|S_{m,N}|}\sum_{k\in S_{m,N}}
|x_k|^{p_2}\right)^\frac{p_1}{p_2}.
\end{align*}
Thus
$$
\frac{1}{|S_{m,N}|}\sum_{k\in S_{m,N}} |x_k|^{p_1} \leq \left(\frac{1}{|S_{m,N}|}
\sum_{k\in S_{m,N}} |x_k|^{p_2}\right)^\frac{p_1}{p_2},
$$
and this completes the proof.
\end{proof}

\bigskip

\begin{proposition}\label{prop:inclusion1}
For all $1\leq p_1\leq p_2 \leq q< \infty$, we have $\ell^{p_2}_q \subseteq
\ell^{p_1}_q$ with $\|x\|_{\ell^{p_1}_q} \le \|x\|_{\ell^{p_2}_q}$ for every
$x\in \ell^{p_2}_q$.
\end{proposition}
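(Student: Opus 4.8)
The plan is to show that for any fixed "window" $S_{m,N}$, the $\ell^{p_1}_q$-contribution of a sequence is dominated by its $\ell^{p_2}_q$-contribution, and then take the supremum. The key observation is that the normalizing factor $|S_{m,N}|^{\frac1q-\frac1p}$ can be split as $|S_{m,N}|^{\frac1q} \cdot |S_{m,N}|^{-\frac1p}$, and the $|S_{m,N}|^{-\frac1p}$ part combines perfectly with the sum $\left(\sum_k |x_k|^p\right)^{1/p}$ to form the *averaged* quantity $\left(\frac{1}{|S_{m,N}|}\sum_k |x_k|^p\right)^{1/p}$ that appears in Lemma~\ref{lemma:monotonicity}. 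This rewriting is what makes the earlier lemma directly applicable.

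Carrying this out, I would first fix $1 \le p_1 \le p_2 \le q < \infty$, a sequence $x \in \ell^{p_2}_q$, and arbitrary $m \in \mathbb{Z}$, $N \in \omega$. I would then rewrite
\[
|S_{m,N}|^{\frac1q-\frac1{p_1}} \left(\sum_{k\in S_{m,N}} |x_k|^{p_1}\right)^{\frac1{p_1}}
= |S_{m,N}|^{\frac1q}\left(\frac{1}{|S_{m,N}|}\sum_{k\in S_{m,N}} |x_k|^{p_1}\right)^{\frac1{p_1}},
\]
and do the analogous rewriting with $p_1$ replaced by $p_2$. Applying Lemma~\ref{lemma:monotonicity} with the given exponents $p_1 \le p_2$ bounds the averaged $p_1$-quantity above by the averaged $p_2$-quantity, and since the common factor $|S_{m,N}|^{\frac1q}$ is positive, I obtain
\[
|S_{m,N}|^{\frac1q-\frac1{p_1}} \left(\sum_{k\in S_{m,N}} |x_k|^{p_1}\right)^{\frac1{p_1}}
\le |S_{m,N}|^{\frac1q-\frac1{p_2}} \left(\sum_{k\in S_{m,N}} |x_k|^{p_2}\right)^{\frac1{p_2}}.
\]

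Finally, I would take the supremum over all $m \in \mathbb{Z}$ and $N \in \omega$ on both sides. The right-hand side supremum is exactly $\|x\|_{\ell^{p_2}_q}$, which is finite by assumption, so the left-hand side supremum is finite as well; this shows simultaneously that $x \in \ell^{p_1}_q$ and that $\|x\|_{\ell^{p_1}_q} \le \|x\|_{\ell^{p_2}_q}$, yielding both the inclusion and the norm inequality.

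I do not anticipate a genuine obstacle here, since Lemma~\ref{lemma:monotonicity} does all the real work. The only point requiring mild care is the bookkeeping of exponents in the rewriting step: one must verify that $\frac1q - \frac1p = \frac1q + \left(-\frac1p\right)$ splits cleanly so that the averaging factor $\frac{1}{|S_{m,N}|}$ enters raised to the power $\frac1p$ inside the parentheses, matching the precise form of the lemma. Beyond that algebraic matching, the argument is a direct, pointwise-in-$(m,N)$ comparison followed by a supremum.
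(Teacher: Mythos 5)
Your proof is correct and is exactly the argument the paper intends: the paper's proof is the single line ``follows immediately from Lemma \ref{lemma:monotonicity},'' and your rewriting of $|S_{m,N}|^{\frac1q-\frac1p}$ to expose the averaged sums, followed by the lemma and a supremum, is precisely how that immediate deduction goes. No gaps; the exponent bookkeeping you flag is handled correctly.
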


\begin{proof}
The proof follows immediately from Lemma \ref{lemma:monotonicity}.
\end{proof}

\bigskip

\noindent{\tt Remark.} At the present we do not know whether the inclusion in Proposition
\ref{prop:inclusion1} is strict, as we do not have an example of a sequence
which is in $\ell^{p_1}_q$ but not in $\ell^{p_2}_q$ for $1\leq p_1< p_2<q<\infty$.

\section{Weak Type Discrete Morrey spaces}

For $1\leq p\leq q <\infty$, we define the {\it weak type discrete Morrey space}
$w\ell^p_q$ to be the set of sequences $x=(x_k)_{k\in\mathbb{Z}}$ taking values in
$\mathbb{K}$ such that $\|x\|_{w\ell^p_q}<\infty$, where $\|\cdot\|_{w\ell^p_q}$ given by
$$\|x\|_{w\ell^p_q } := \sup_{m\in\mathbb{Z}, N \in\omega, \gamma>0 }
|S_{m,N}|^{\frac1q-\frac1p}\gamma \big|\{k\in S_{m,N}:\ |x_k| >\gamma\}\big|^\frac1p.$$
Note that when $p=q$, we have $w\ell^p := w\ell^p_p$, which is a weak type of
$\ell^p$ space. The following example shows that $w\ell^p_p$ has more elements that $\ell^p$.

\bigskip

\begin{example}
The sequence $x=(x_k)_{k\in \mathbb{Z}}$ given by $x_k:=|k|^{-\frac1p}$ when $k \neq 0$
and $x_0:=1$ is not in $\ell^p$. Nevertheless, for any $m\in\mathbb{Z}, N\in\omega$, and
$0<\gamma<1$, we have
\begin{align*}
\gamma |\{k\in S_{m,N}: |x_k|>\gamma\}|^\frac1p
&\le \gamma |\{k\in S_{0,N}: |x_k|>\gamma\}|^\frac1p\\
&\le \gamma \Bigl(1 + 2|\{k\in\mathbb{N} :
1\le k\leq N,\ k^{-\frac1p}>\gamma\}|^\frac1p\Bigr) \\
&< \gamma \Bigl(1 + \frac{2}{\gamma}\Bigr)<3.
\end{align*}
Thus $(x_k)_{k\in \mathbb{Z}}$ is in $w\ell^p_p$.
\end{example}

\bigskip

\begin{theorem}\label{thm:weak-contains-strong}
For $1\leq p\leq q <\infty$, $\ell^p_q \subseteq w\ell^p_q$ with $\|x\|_{w\ell^p_q}
\le \|x\|_{\ell^p_q}$ for every $x\in \ell^p_q$.
\end{theorem}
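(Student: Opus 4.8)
The plan is to establish a discrete Chebyshev-type inequality at the level of each fixed window $S_{m,N}$ and each threshold $\gamma$, and then pass to the suprema defining the two norms. This mirrors the classical fact that the strong $L^p$ norm dominates the weak $L^p$ norm.

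First I would fix $x\in\ell^p_q$, $m\in\mathbb{Z}$, $N\in\omega$, and $\gamma>0$, and set $E:=\{k\in S_{m,N}:\ |x_k|>\gamma\}$. On $E$ every term satisfies $|x_k|^p>\gamma^p$, so that
$$\gamma^p|E|\le\sum_{k\in E}|x_k|^p\le\sum_{k\in S_{m,N}}|x_k|^p.$$
Taking $p$-th roots yields $\gamma|E|^{\frac1p}\le\left(\sum_{k\in S_{m,N}}|x_k|^p\right)^{\frac1p}$.

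Next, since $|S_{m,N}|^{\frac1q-\frac1p}>0$ is precisely the weight appearing in both the strong and the weak functional, I would multiply through by it to obtain
$$|S_{m,N}|^{\frac1q-\frac1p}\gamma|E|^{\frac1p}\le|S_{m,N}|^{\frac1q-\frac1p}\left(\sum_{k\in S_{m,N}}|x_k|^p\right)^{\frac1p}\le\|x\|_{\ell^p_q}.$$
The right-hand side is independent of $m$, $N$, and $\gamma$, so taking the supremum over all such triples on the left gives $\|x\|_{w\ell^p_q}\le\|x\|_{\ell^p_q}$. This in particular shows $x\in w\ell^p_q$, and hence $\ell^p_q\subseteq w\ell^p_q$.

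I do not expect a genuine obstacle here. The only point requiring a little care is that the estimate must hold uniformly in $\gamma$ before the supremum is taken, but this is automatic because the dominating bound $\|x\|_{\ell^p_q}$ does not involve $\gamma$. The crucial structural observation is that the same factor $|S_{m,N}|^{\frac1q-\frac1p}$ decorates both functionals, so it passes cleanly through the comparison and the argument reduces to the elementary Chebyshev estimate $\gamma^p|\{k\in S_{m,N}:\ |x_k|>\gamma\}|\le\sum_{k\in S_{m,N}}|x_k|^p$ on each finite window.
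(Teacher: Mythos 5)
Your proof is correct and is essentially the same argument as the paper's: both rest on the pointwise Chebyshev estimate $\gamma^p\,\big|\{k\in S_{m,N}:\ |x_k|>\gamma\}\big|\le\sum_{k\in S_{m,N}}|x_k|^p$ on each finite window, followed by multiplication by the common weight $|S_{m,N}|^{\frac1q-\frac1p}$ and passage to the supremum over $m$, $N$, and $\gamma$. No gaps; nothing further is needed.
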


\begin{proof}
Let $x\in \ell^p_q$, $m\in\mathbb{Z}$, $N\in \omega$, and $\gamma>0$. We have
\begin{align*}
|S_{m,N}|^{\frac1q-\frac1p} \gamma \big|\{k\in S_{m,N}:\ |x_k| >\gamma\}\big|^\frac1p
&=   |S_{m,N}|^{\frac1q-\frac1p}\bigg( \sum_{k\in S_{m,N},|x_k| >\gamma} \gamma^p
\bigg)^\frac1p \\
&\leq   |S_{m,N}|^{\frac1q-\frac1p}\bigg( \sum_{k\in S_{m,N}, |x_k| >\gamma} |x_k|^p
\bigg)^\frac1p \\
&\leq   |S_{m,N}|^{\frac1q-\frac1p}\bigg( \sum_{k\in S_{m,N}} |x_k|^p\bigg)^\frac1p.
\end{align*}
Taking the supremum over $m\in\mathbb{Z}$, $N\in \omega$, and $\gamma>0$, we obtain
$\|x\|_{w\ell^p_q}\leq\|x\|_{\ell^p_q}$. Therefore, if $x \in \ell^p_q$, then $x \in
w\ell^p_q$.
\end{proof}


\bigskip

\begin{theorem}
For $1\leq p\leq q <\infty$, $\|\cdot\|_{w\ell^p_q } $ is a quasi-norm, so that
$(w\ell^p_q,\|\cdot\|_{w\ell^p_q})$ is a quasi-normed space.
\end{theorem}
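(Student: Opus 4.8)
The plan is to verify the three defining properties of a quasi-norm: positive definiteness, absolute homogeneity, and a quasi-triangle inequality $\|x+y\|_{w\ell^p_q}\le C\big(\|x\|_{w\ell^p_q}+\|y\|_{w\ell^p_q}\big)$ for some constant $C\ge 1$. The first two are routine and parallel the corresponding steps in Proposition~\ref{prop:2-2}, so I would dispatch them quickly. Nonnegativity is immediate from the definition. For definiteness, if $x\neq 0$ then some index $k_0$ has $x_{k_0}\neq 0$; taking $m=k_0$, $N=0$ (so $S_{m,N}=\{k_0\}$ and $|S_{m,N}|=1$) and letting $\gamma\uparrow|x_{k_0}|$ shows $\|x\|_{w\ell^p_q}\ge|x_{k_0}|>0$. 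For homogeneity with $\alpha\neq 0$, the identity $\{k:|\alpha x_k|>\gamma\}=\{k:|x_k|>\gamma/|\alpha|\}$ together with the substitution $\gamma=|\alpha|\gamma'$ inside the supremum yields $\|\alpha x\|_{w\ell^p_q}=|\alpha|\,\|x\|_{w\ell^p_q}$.

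The substantive step, and the main obstacle, is the quasi-triangle inequality. Unlike the strong case in Proposition~\ref{prop:2-2}, Minkowski's inequality is not directly available, because here we measure super-level sets rather than $p$-sums. My strategy is the standard device of splitting at half the threshold. Fix $m$, $N$, and $\gamma>0$. Since $|x_k+y_k|>\gamma$ forces $|x_k|>\gamma/2$ or $|y_k|>\gamma/2$, I obtain the set inclusion
$$\{k\in S_{m,N}:|x_k+y_k|>\gamma\}\subseteq\{k\in S_{m,N}:|x_k|>\tfrac{\gamma}{2}\}\cup\{k\in S_{m,N}:|y_k|>\tfrac{\gamma}{2}\},$$
so the corresponding cardinalities satisfy the subadditive bound.

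Next I would pass from cardinalities to their $p$-th roots using the elementary inequality $(a+b)^{1/p}\le a^{1/p}+b^{1/p}$ for $a,b\ge 0$ and $p\ge 1$ (subadditivity of the concave map $t\mapsto t^{1/p}$), and then multiply through by $|S_{m,N}|^{1/q-1/p}\gamma$. Writing $\gamma=2\cdot(\gamma/2)$ converts each of the two resulting terms into twice a quantity of the exact form appearing in the definition of $\|\cdot\|_{w\ell^p_q}$ at threshold $\gamma/2$; bounding each by the supremum over all thresholds gives
$$|S_{m,N}|^{\frac1q-\frac1p}\gamma\,\big|\{k\in S_{m,N}:|x_k+y_k|>\gamma\}\big|^{\frac1p}\le 2\|x\|_{w\ell^p_q}+2\|y\|_{w\ell^p_q}.$$
Taking the supremum over $m$, $N$, and $\gamma$ yields the quasi-triangle inequality with constant $C=2$, completing the verification that $\|\cdot\|_{w\ell^p_q}$ is a quasi-norm. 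I expect the only delicate point to be the careful bookkeeping of the factor $2$ through the rescaling of $\gamma$; everything else is mechanical.
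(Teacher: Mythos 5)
Your proposal is correct and follows essentially the same route as the paper: the same superlevel-set inclusion with splitting at $\gamma/2$, the same use of subadditivity of $t\mapsto t^{1/p}$, and the same rescaling of the threshold, yielding the quasi-triangle inequality with constant $2$ (your definiteness argument is merely the contrapositive of the paper's, and your algebra takes $p$-th roots before rather than after summing, which is immaterial). No gaps.
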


\begin{proof}
From the definition of $\|\cdot\|_{w\ell^p_q }$, it is clear that
$\|x\|_{w\ell^p_q } \geq 0$ for all $x \in w\ell^p_q $. Let $m\in\mathbb{Z}$,
$N\in \omega$, and $\gamma>0$. If $x=0$, then $\{k\in S_{m,N}:\ |x_k| >\gamma\}$ is
an empty set, and therefore its cardinality is zero. Thus
$$ |S_{m,N}|^{\frac1q-\frac1p}\gamma \big|\{k\in S_{m,N}:\ |x_k| >
\gamma\}\big|^\frac1p=0.$$
Taking the supremum over $m\in\mathbb{Z}$, $N\in \omega$, and $\gamma>0$,
we obtain $\|x\|_{w\ell^p_q }= 0$. Conversely, suppose that $\|x\|_{w\ell^p_q } = 0$. Then
$$ \big|\{k\in S_{m,N}:\ |x_k| >\gamma\}\big|=0$$
for all $m\in\mathbb{Z},\ N\in \omega$, and $\gamma >0$.
We conclude that for all $k\in\mathbb{Z}$, we have $0 \leq |x_k|  \leq \gamma$ for
all $\gamma >0$. Thus $x_k=0$ for all $k\in \mathbb{Z}$, that is, $x=0$.

Next let $x=(x_k)_{k\in\Z} \in {w\ell^p_q }$ and $\alpha \in \mathbb{K}$. When $\alpha=0$,
clearly $\|\alpha x\|_{w\ell^p_q } =|\alpha| \|x\|_{w\ell^p_q }$. Suppose $\alpha
\neq 0$. We have
\begin{align*}
\|\alpha x\|_{w\ell^p_q } &=\sup_{m\in\mathbb{Z}, N \in\omega, \gamma>0 }
|S_{m,N}|^{\frac1q-\frac1p} \gamma \big|\{k\in S_{m,N}:\ |\alpha x_k| >\gamma\}\big|^\frac1p\\
&=\sup_{m\in\mathbb{Z}, N \in\omega, \gamma>0 } |S_{m,N}|^{\frac1q-\frac1p} \gamma
\left|\left\{k\in S_{m,N}:\ |x_k| >\frac{\gamma}{|\alpha|}\right\}\right|^\frac1p\\
&=\sup_{m\in\mathbb{Z}, N \in\omega, \delta>0 } |S_{m,N}|^{\frac1q-\frac1p} \delta
|\alpha|\big|\{k\in S_{m,N}:\ |x_k| >\delta\}\big|^\frac1p=|\alpha|\| x\|_{w\ell^p_q }.
\end{align*}
Now let $ x=(x_k)_{k\in\Z},y=(y_k)_{k\in\Z} \in {w\ell^p_q }$. For any $m\in\mathbb{Z},\
N\in\omega$, and $\gamma>0$, we observe that
\begin{align*}
\{k\in S_{m,N}: |x_k+y_k| >\gamma\}
&\subseteq \{k\in S_{m,N}: |x_k|+|y_k| >\gamma\} \\
&\subseteq \{k\in S_{m,N}: |x_k| >\frac{\gamma}{2}\} \cup \{k\in S_{m,N}: |y_k| >\frac{\gamma}{2}\}.
\end{align*}
Hence for any $m\in\mathbb{Z}$, $N\in \omega$, and $\gamma>0$, we have
\begin{align*}
&\left(|S_{m,N}|^{\frac1q-\frac1p}\gamma\right)^p  \big|\{k\in S_{m,N}:\ |x_k+y_k| >\gamma\}\big|\\
&\quad\leq \left(|S_{m,N}|^{\frac1q-\frac1p} \gamma\right)^p \big|\{k\in S_{m,N}:\ |x_k| >\frac{\gamma}{2}\}\big|
+ \left(|S_{m,N}|^{\frac1q-\frac1p}\gamma\right)^p  \big|\{k\in S_{m,N}:\ |y_k| >\frac{\gamma}{2}\}\big|.
\end{align*}
By writing $\delta=\gamma/2$ on the right hand side of the above inequality, we get
\begin{align*}
&\left(|S_{m,N}|^{\frac1q-\frac1p} \gamma \right)^p\big|\{k\in S_{m,N}:\ |x_k+y_k| >\gamma\}\big|\\
&\quad\leq 2^p\left(|S_{m,N}|^{\frac1q-\frac1p} \delta \right)^p\big|\{k\in S_{m,N}:\ |x_k| >\delta\}\big|
+ 2^p\left(|S_{m,N}|^{\frac1q-\frac1p} \delta \right)^p  \big|\{k\in S_{m,N}:\ |y_k| >\delta\}\big|,
\end{align*}
whence
\begin{align*}
&|S_{m,N}|^{\frac1q-\frac1p} \gamma \big|\{k\in S_{m,N}:\ |x_k+y_k| >\gamma\}\big|^\frac1p\\
&\quad\leq 2\left[\left(|S_{m,N}|^{\frac1q-\frac1p}\delta\right)^p  \big|\{k\in S_{m,N}:
\ |x_k| >\delta\}\big|
+ \left(|S_{m,N}|^{\frac1q-\frac1p}\delta\right)^p  \big|\{k\in S_{m,N}:\ |y_k| >\delta\}\big|\right]^\frac1p\\
&\quad\leq 2\left[|S_{m,N}|^{\frac1q-\frac1p} \delta \big|\{k\in S_{m,N}:\ |x_k| >\delta\}\big|^\frac1p\right]
 + 2\left[|S_{m,N}|^{\frac1q-\frac1p} \delta \big|\{k\in S_{m,N}:\ |y_k| >\delta\}\big|^\frac1p\right]\\
&\quad\leq 2(\|x\|_{w\ell^p_q }+\|y\|_{w\ell^p_q }).
\end{align*}
Taking the supremum over $m\in\mathbb{Z}$, $N\in \omega$, and $\gamma>0$, we get $\|x+y\|_{w\ell^p_q }\leq
2(\|x\|_{w\ell^p_q }+\|y\|_{w\ell^p_q }).$
\end{proof}

\bigskip

\begin{proposition}\label{weak-completeness}
For $1\leq p\leq q<\infty$, $w\ell_{q}^{p}$ is complete with respect to the quasi-norm $\|\cdot\|_{w\ell_{q}^{p}}$.
\end{proposition}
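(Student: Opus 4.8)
The plan is to follow the same strategy as in the proof of Proposition \ref{prop:2-2}: starting from a Cauchy sequence $(x^{(n)})_{n\in\mathbb{N}}$ in $w\ell^p_q$, first produce a candidate limit $x$ by taking coordinatewise limits, and then show that $x\in w\ell^p_q$ and that $x^{(n)}\to x$ in quasi-norm. The one structural difference is that $\|\cdot\|_{w\ell^p_q}$ is only a quasi-norm, but since completeness of a quasi-normed space amounts to every Cauchy sequence being convergent, it suffices to exhibit the limit; the quasi-triangle inequality (with its constant $2$) will only be needed at the very end.

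First I would extract the pointwise limit. Specializing the defining supremum to $N=0$, so that $S_{m,0}=\{m\}$ and $|S_{m,0}|^{\frac1q-\frac1p}=1$, and letting $\gamma\uparrow|x_m|$, one gets $\|x\|_{w\ell^p_q}\ge \sup_{m\in\mathbb{Z}}|x_m|$; that is, the weak quasi-norm dominates the sup-norm. Hence, given $\varepsilon>0$ and $n_\varepsilon$ with $\|x^{(i)}-x^{(j)}\|_{w\ell^p_q}<\varepsilon$ for $i,j\ge n_\varepsilon$, the scalar sequences $(x^{(n)}_k)_{n\in\mathbb{N}}$ are Cauchy in $\mathbb{K}$ uniformly in $k$, so we may define $x_k:=\lim_{n\to\infty}x^{(n)}_k$ and set $x:=(x_k)_{k\in\mathbb{Z}}$.

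The heart of the argument is to pass to the limit $j\to\infty$ inside the weak quasi-norm of $x^{(i)}-x^{(j)}$. Here I would fix $i\ge n_\varepsilon$ together with a single triple $m\in\mathbb{Z}$, $N\in\omega$, $\gamma>0$, and consider the finite set $A:=\{k\in S_{m,N}:\ |x^{(i)}_k-x_k|>\gamma\}$. For each $k\in A$ we have $|x^{(i)}_k-x^{(j)}_k|\to|x^{(i)}_k-x_k|>\gamma$ as $j\to\infty$, so each such coordinate eventually exceeds $\gamma$; because $A$ is finite there is a single $J$ with $A\subseteq\{k\in S_{m,N}:\ |x^{(i)}_k-x^{(j)}_k|>\gamma\}$ for all $j\ge J$. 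Consequently
$$
|S_{m,N}|^{\frac1q-\frac1p}\gamma\,|A|^{\frac1p}
\le |S_{m,N}|^{\frac1q-\frac1p}\gamma\,\big|\{k\in S_{m,N}:\ |x^{(i)}_k-x^{(j)}_k|>\gamma\}\big|^{\frac1p}
\le \|x^{(i)}-x^{(j)}\|_{w\ell^p_q}<\varepsilon
$$
for all $j\ge J$. Since the left-hand side is independent of $j$, this bound survives, and taking the supremum over all $m$, $N$, and $\gamma$ yields $\|x^{(i)}-x\|_{w\ell^p_q}\le\varepsilon$ for every $i\ge n_\varepsilon$.

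Finally, writing $x=x^{(i)}-(x^{(i)}-x)$ with $i\ge n_\varepsilon$, both summands lie in $w\ell^p_q$ (the second because its quasi-norm is at most $\varepsilon$), so the quasi-triangle inequality gives $x\in w\ell^p_q$; the estimate $\|x^{(i)}-x\|_{w\ell^p_q}\le\varepsilon$ then shows $x^{(i)}\to x$. I expect the main obstacle to be precisely the interchange of the limit in $j$ with the level-set cardinality and the supremum over $\gamma$: super-level sets are not continuous under limits in general, so it is essential to freeze $m$, $N$, and $\gamma$ \emph{first} --- which confines the relevant level set to the finite index set $S_{m,N}$ --- and to exploit that a real sequence converging to a value strictly larger than $\gamma$ must eventually exceed $\gamma$. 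Care is also needed to keep the strict inequalities consistent so that the inclusion of $A$ in the level set of $x^{(i)}-x^{(j)}$ is genuinely valid.
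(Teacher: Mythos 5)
Your proof is correct, and while it follows the same broad outline as the paper's (coordinatewise limits via the $N=0$ specialization, then an estimate on $\|x^{(i)}-x\|_{w\ell^p_q}$), it differs in two substantive ways, both of which simplify matters. First, for the convergence step the paper handles the discontinuity of level sets under pointwise limits by writing
\[
\{k\in S_{m,N}:|x_{k}-x_{k}^{(j)}|>\gamma\}=\bigcup_{M=1}^{\infty}\bigcap_{i=M}^{\infty}\{k\in S_{m,N}:|x_{k}^{(i)}-x_{k}^{(j)}|>\gamma\}
\]
and invoking continuity of the counting measure, whereas you exploit directly that the limit level set $A$ is finite, so each of its (finitely many) points eventually lies in the level set of $x^{(i)}-x^{(j)}$, giving a single index $J$ beyond which $A$ is contained in it; this is a more elementary argument for the same subtlety. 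Second, the paper proves $x\in w\ell^p_q$ \emph{separately}, using boundedness of the Cauchy sequence and a $\gamma/2$-splitting of level sets, and only afterwards proves convergence; you obtain membership for free from the estimate $\|x^{(i)}-x\|_{w\ell^p_q}\le\varepsilon$ together with $x=x^{(i)}-(x^{(i)}-x)$ and the quasi-triangle inequality, exactly as in the strong-norm Proposition \ref{prop:2-2}, so the boundedness step disappears entirely. (Your extraction of the pointwise limit via the bound $\|y\|_{w\ell^p_q}\ge\sup_{m}|y_m|$ is also cleaner than the paper's argument with $\epsilon^{\frac1p+1}$ and integer-valued cardinalities, though it amounts to the same specialization.) One cosmetic slip: in your key display the final inequality $\|x^{(i)}-x^{(j)}\|_{w\ell^p_q}<\varepsilon$ requires $j\ge n_\varepsilon$ as well as $j\ge J$, so you should take $j\ge\max\{J,n_\varepsilon\}$; since you only need one such $j$, this does not affect the argument.
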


\begin{proof}
As before, we denote sequences in $w\ell_{q}^{p}$ by $(x^{(n)})_{n\in\N}$, where $x^{(n)}=(x_k^{(n)})_{k\in\Z}$
for each $n\in\N$.
Let $1\leq p\leq q<\infty$, and let $(x^{(n)})_{n\in\N}$ be a Cauchy sequence in $w\ell_{q}^{p}$. We first
note that $(x^{(n)})_{n\in\N}$ must be bounded in $w\ell_{q}^{p}$.
Next we show that there exists a sequence $(x_{k})_{k\in\Z}$ taking values in $\mathbb{K}$ such that
$x_{k}^{(n)}\rightarrow x_{k}$ as $n\to\infty$, for each $k\in\Z$.
To do so, let $0<\epsilon\leq 1$. Choose $M\in\N$ such that for every $k_{0}\in\Z$ and all $i,j\geq M$,
\[
|S_{k_{0},0}|^{\frac{1}{q}-\frac{1}{p}}\epsilon|\{k\in S_{k_{0},0}:|x^{(i)}_{k}-x^{(j)}_{k}|>\epsilon\}|^{1/p}
\leq\|x^{(i)}-x^{(j)}\|_{w\ell_{q}^{p}}<\epsilon^{\frac{1}{p}+1}.
\]
Note that $S_{k_{0},0}=\{k_{0}\}$ for every $k_{0}\in\Z$. It thus follows that
\[
|\{k\in S_{k_{0},0}:|x^{(i)}_{k}-x^{(j)}_{k}|>\epsilon\}|<\epsilon.
\]
for every $k_{0}\in\Z$ and $i,j\geq M$. This implies that
\[
|x^{(i)}_{k_{0}}-x^{(j)}_{k_{0}}|\leq\epsilon
\]
for every $k_{0}\in\Z$ and $i,j\geq M$.
Hence, for each $k\in\Z$, $(x_{k}^{(n)})_{n\in\N}$ is a Cauchy sequence taking values in $\mathbb{K}$.
Therefore, $\lim\limits_{n\rightarrow\infty}x_k^{(n)}$ exists for each $k\in\Z$. Now for each $k\in\Z$, define
$x_{k}:=\lim\limits_{n\rightarrow\infty}x_{k}^{(n)}$, so that $x_{k}^{(n)}\rightarrow x_{k}$ as $n\to\infty$.

We claim that $x:=(x_k)_{k\in\Z}\in w\ell_q^p$. Let $C>0$ satisfy $\|x^{(n)}\|_{w\ell_{q}^{p}}\leq C\ (n\in\N)$,
and let $m\in\Z,\ N\in\w$, and $\gamma>0$. We observe that for any $n\in\N$,
\begin{align*}
&|S_{m,N}|^{\frac{1}{q}-\frac{1}{p}}\frac{\gamma}{2}|\{k\in S_{m,N}:|x_{k}|>\gamma\}|^{\frac{1}{p}}\\
&\leq|S_{m,N}|^{\frac{1}{q}-\frac{1}{p}}\frac{\gamma}{2}\left(\left|\left\{k\in S_{m,N}:|x_{k}-x^{(n)}_{k}|>
\frac{\gamma}{2}\right\}\right|
+\left|\left\{k\in S_{m,N}:|x^{(n)}_{k}|>\frac{\gamma}{2}\right\}\right|\right)^{\frac{1}{p}}\\
&\leq|S_{m,N}|^{\frac{1}{q}-\frac{1}{p}}\frac{\gamma}{2}\left|\left\{k\in S_{m,N}:|x_{k}-x^{(n)}_{k}|>
\frac{\gamma}{2}\right\}\right|^{\frac{1}{p}}
+|S_{m,N}|^{\frac{1}{q}-\frac{1}{p}}\frac{\gamma}{2}\left|\left\{k\in S_{m,N}:|x^{(n)}_{k}|>
\frac{\gamma}{2}\right\}\right|^{\frac{1}{p}}.
\end{align*}
Since $x_{k}^{(n)}\rightarrow x_{k}$ as $n\to\infty$ for each $k\in\Z$ and $S_{m,N}$ is finite,
we can choose $n_{0}\in\N$
large enough so that $|x^{(n_{0})}_{k}-x_{k}|\leq\frac{\gamma}{2}$ for $k\in S_{m,N}$. Then
\[
|S_{m,N}|^{\frac{1}{q}-\frac{1}{p}}\frac{\gamma}{2}\left|\left\{k\in S_{m,N}:|x_{k}-x^{(n_{0})}_{k}|>
\frac{\gamma}{2}\right\}\right|^{\frac{1}{p}}=0.
\]
It follows that
\begin{align*}
|S_{m,N}|^{\frac{1}{q}-\frac{1}{p}}\frac{\gamma}{2}|\{k\in S_{m,N}:|x_{k}|>\gamma\}|^{\frac{1}{p}}&\leq
|S_{m,N}|^{\frac{1}{q}-\frac{1}{p}}\frac{\gamma}{2}\left|\left\{k\in S_{m,N}:|x^{(n_{0})}_{k}|>
\frac{\gamma}{2}\right\}\right|^{\frac{1}{p}}\\
&\leq\|x_{k}^{(n_{0})}\|_{w\ell_{q}^{p}}\leq C.
\end{align*}
Taking the supremum over all $m\in\Z,\ N\in\w$, and $\gamma>0$, we have
$\frac{1}{2}||x||_{w\ell_{q}^{p}}\leq C<\infty$
and thus $\|x\|_{w\ell_{q}^{p}}<\infty$. Therefore, $x\in w\ell_{q}^{p}$, as claimed.

Finally, we show that $x^{(n)}\to x$ as $n\to\infty$ in the quasi norm $\|\cdot\|_{w\ell_q^p}$.
For this, let $\epsilon>0$.
Fix $j\in\N,\ m\in\Z,\ N\in\w$, and $\gamma>0$. Since for $k\in\Z$ we have $|x_{k}-x_{k}^{(j)}|=
\lim\limits_{i\rightarrow\infty}|x_{k}^{(i)}-x_{k}^{(j)}|$, we obtain $|x_{k}-x_{k}^{(j)}|>\gamma$
if and only if there exists $M\in\N$ such that $|x_{k}^{(i)}-x_{k}^{(j)}|>\gamma$ for $i\geq M$.
Using this fact, one readily proves the identity
\[
\{k\in S_{m,N}:|x_{k}-x_{k}^{(j)}|>\gamma\}=\bigcup_{M=1}^{\infty}\bigcap_{i=M}^{\infty}
\{k\in S_{m,N}:|x_{k}^{(i)}-x_{k}^{(j)}|>\gamma\}.
\]
Using the continuity of counting measure, we obtain
\begin{align*}
|S_{m,N}|^{\frac{1}{q}-\frac{1}{p}}\gamma &|\{k\in S_{m,N}:|x_{k}-x^{(j)}_{k}|>\gamma\}|^{\frac{1}{p}}\\
&=|S_{m,N}|^{\frac{1}{q}-\frac{1}{p}}\gamma\left|\bigcup_{M=1}^{\infty}\bigcap_{i=M}^{\infty}
\{k\in S_{m,N}:|x^{(i)}_{k}-x^{(j)}_{k}|>\gamma\}\right|^{\frac{1}{p}}\\
&=\lim_{M\rightarrow\infty}|S_{m,N}|^{\frac{1}{q}-\frac{1}{p}}\gamma\left|\bigcap_{i=M}^{\infty}
\{k\in S_{m,N}:|x^{(i)}_{k}-x^{(j)}_{k}|>\gamma\}\right|^{\frac{1}{p}}.
\end{align*}
For any $M\in\N$,
\begin{align*}
|S_{m,N}|^{\frac{1}{q}-\frac{1}{p}}\gamma &\left|\bigcap_{i=M}^{\infty}\{k\in S_{m,N}:
|x^{(i)}_{k}-x^{(j)}_{k}|>\gamma\}\right|^{\frac{1}{p}}\\
&\leq|S_{m,N}|^{\frac{1}{q}-\frac{1}{p}}\gamma\left|\{k\in S_{m,N}:
|x^{(M)}_{k}-x^{(j)}_{k}|>\gamma\}\right|^{\frac{1}{p}}\\
&\leq||x^{(M)}-x^{(j)}||_{w\ell_{q}^{p}}.
\end{align*}
Since $(x^{(n)})_{n\in\N}$ is Cauchy in $w\ell_{q}^{p}$, there exists
$P\in\N$ (which does not depend on $m,N$, or $\gamma$) such that if $M,j\geq P$, then
\[
\|x^{(M)}-x^{(j)}\|_{w\ell_{q}^{p}}<\epsilon.
\]
Thus for $j\geq P$,
\[
\lim_{M\rightarrow\infty}|S_{m,N}|^{\frac{1}{q}-\frac{1}{p}}\gamma
\left|\bigcap_{i=M}^{\infty}\{k\in S_{m,N}:|x^{(i)}_{k}-x^{(j)}_{k}|>\gamma\}\right|^{\frac{1}{p}}<\epsilon.
\]
Summarizing, we have for $j\geq P$,
\[
|S_{m,N}|^{\frac{1}{q}-\frac{1}{p}}\gamma|\{k\in S_{m,N}:|x_{k}-x^{(j)}_{k}|>\gamma\}|^{\frac{1}{p}}<\epsilon.
\]
Taking the supremum over all $m\in\Z,\ N\in\w$, and $\gamma>0$, we obtain for $j\geq P$,
\[
\|x-x^{(j)}\|_{w\ell_{q}^{p}}<\epsilon.
\]
Hence $x^{(n)}\rightarrow x$ as $n\to\infty$ in $w\ell_{q}^{p}$, and this ends the proof.
\end{proof}

\medskip

The next proposition gives the inclusion property between two weak type discrete
Morrey spaces.

\bigskip

\begin{proposition}
Let $1\leq p_1\leq p_2\leq q <\infty$. Then $w\ell^{p_2}_q \subseteq w\ell^{p_1}_q$ with
$\|x\|_{w\ell^{p_1}_q} \le \|x\|_{w\ell^{p_2}_q}$ for every $x\in w\ell^{p_2}_q$.
\end{proposition}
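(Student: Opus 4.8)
The plan is to reduce the desired inequality to a single elementary pointwise estimate by fixing all the parameters. First I would fix $m\in\mathbb{Z}$, $N\in\omega$, and $\gamma>0$, and write $E:=\{k\in S_{m,N}:\ |x_k|>\gamma\}$ for the exceedance set. Since $E\subseteq S_{m,N}$, its cardinality satisfies $0\le |E|\le |S_{m,N}|$, so the ratio $t:=|E|/|S_{m,N}|$ lies in $[0,1]$. The key observation is that, for this same triple $(m,N,\gamma)$, the quantity defining the $w\ell^{p_2}_q$-quasi-norm is automatically dominated by $\|x\|_{w\ell^{p_2}_q}$; hence it suffices to compare the $p_1$- and $p_2$-summands for a fixed triple and then take a supremum.

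To carry this out, I would factor the common power $|S_{m,N}|^{1/q}$ out of each summand, rewriting
$$|S_{m,N}|^{\frac1q-\frac1{p_i}}\gamma\,|E|^{\frac1{p_i}} = |S_{m,N}|^{\frac1q}\,\gamma\,t^{\frac1{p_i}}\qquad(i=1,2).$$
Because $p_1\le p_2$ gives $\frac1{p_1}\ge\frac1{p_2}$, and because raising a number $t\in[0,1]$ to a larger exponent only decreases it, we have $t^{1/p_1}\le t^{1/p_2}$ (with the convention $0^{1/p}=0$ covering the case $E=\varnothing$). Multiplying through by $|S_{m,N}|^{1/q}\gamma\ge 0$ then yields
$$|S_{m,N}|^{\frac1q-\frac1{p_1}}\gamma\,|E|^{\frac1{p_1}} \le |S_{m,N}|^{\frac1q-\frac1{p_2}}\gamma\,|E|^{\frac1{p_2}} \le \|x\|_{w\ell^{p_2}_q},$$
the last inequality being just the definition of the $w\ell^{p_2}_q$-quasi-norm evaluated at the fixed triple $(m,N,\gamma)$. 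Taking the supremum over all $m\in\mathbb{Z}$, $N\in\omega$, and $\gamma>0$ on the left-hand side gives $\|x\|_{w\ell^{p_1}_q}\le\|x\|_{w\ell^{p_2}_q}$, which in particular shows that every $x\in w\ell^{p_2}_q$ lies in $w\ell^{p_1}_q$.

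I expect no serious obstacle here: the argument is the weak-type analogue of Lemma \ref{lemma:monotonicity}, and the only point requiring a moment's care is the monotonicity of the map $a\mapsto t^{a}$ on $[0,1]$ (decreasing in the exponent), together with the degenerate case $|E|=0$, where both sides vanish. Unlike the strong-type inclusion of Proposition \ref{prop:inclusion1}, no H\"older-type averaging is needed, since the cardinality $|E|$ already plays the role of the $p$-independent mass and the entire dependence on $p$ is concentrated in the single factor $t^{1/p}$.
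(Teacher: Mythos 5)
Your proof is correct and follows essentially the same route as the paper: both arguments come down to the observation that $\bigl(|E|/|S_{m,N}|\bigr)^{\frac{1}{p_1}-\frac{1}{p_2}}\le 1$, so the $p_1$-summand is dominated by the $p_2$-summand for each fixed $(m,N,\gamma)$, after which one takes suprema. Your factoring of $|S_{m,N}|^{1/q}\gamma$ is a slightly tidier packaging than the paper's rearrangement (which isolates $\gamma$ and must treat the case $|E|=0$ separately), but the underlying idea is identical.
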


\begin{proof}
Let $x \in w\ell^{p_2}_q$ and $\gamma >0$. By definition, we have
$$ |S_{m,N}|^{\frac1q-\frac1{p_2}} \gamma \big|\{k\in S_{m,N}:\ |x_k| >\gamma\}\big|^\frac1{p_2}
\leq \|x\|_{w\ell^{p_2}_q },$$
for any $m\in\mathbb{Z}$ and $N\in \w$. Assuming that $|\{k\in S_{m,N}:\ |x_k| >\gamma\}\big|\not=0$,
we have
$$\gamma \leq \frac{ |S_{m,N}|^{\frac1{p_2}-\frac1q} } {\big|\{k\in S_{m,N}:\ |x_k| >\gamma\}
\big|^\frac1{p_2}} \|x\|_{w\ell^{p_2}_q }.$$
Therefore, for any $m\in\mathbb{Z}$ and $N\in \w$ , we have
\begin{align*}
|S_{m,N}|^{\frac1q-\frac1{p_1}}\gamma \big|\{k\in S_{m,N}:\ |x_k| >\gamma\}\big|^\frac1{p_1}
&\quad\leq \frac{ |S_{m,N}|^{\frac1{p_2}-\frac1{p_1}} } {\big|\{k\in S_{m,N}:\ |x_k| >\gamma\}
\big|^{\frac1{p_2}-\frac1{p_1}}} \|x\|_{w\ell^{p_2}_q }\\
&\quad= \left(\frac{\big|\{k\in S_{m,N}:\ |x_k| >\gamma\}\big|}{|S_{m,N}| }\right)^{\frac1{p_1}-
\frac1{p_2}} \|x\|_{w\ell^{p_2}_q }\\
&\quad\leq \|x\|_{w\ell^{p_2}_q }.
\end{align*}
We see that the inequality also holds when $|\{k\in S_{m,N}:\ |x_k| >\gamma\}\big|=0$.
Taking the supremum over $m\in\mathbb{Z}$, $N\in \w$, and $\gamma>0$, we obtain
$ \|x\|_{w\ell^{p_1}_q }\leq  \|x\|_{w\ell^{p_2}_q }$, and the proof is complete.
\end{proof}

\section{Generalized Discrete Morrey Spaces}

The generalized discrete Morrey space $\ell^p_\phi$ is equipped with two parameters,
that is, $1\leq p < \infty$ and a function $\phi \in \mathcal{G}_p$, where
$\mathcal{G}_p=\mathcal{G}_p(2\omega + 1)$ is the set of all functions $\phi:2\omega + 1\rightarrow (0,\infty)$
such that $\phi$ is {\it almost decreasing} (that is, there exists $C>0$ such that $\phi(2M+1) \ge
C\,\phi(2N+1)$ for $M,N\in\omega$ with $M\le N$), and the mapping $(2N+1)\mapsto (2N+1)^\frac{1}{p}\phi(2N+1)$ is
{\it almost increasing} (that is, there exists $C>0$ such that $(2M+1)^\frac1p \phi(2M+1) \le
C\,(2N+1)^\frac1p \phi(2N+1)$ for $M,N\in\omega$ with $M\le N$).
Note that $\phi\in \mathcal{G}_p$ implies that $\phi$ satisfies the {\it doubling condition},
that is, there exists $C>0$ such that
$$\frac1C \leq \frac{\phi(2M+1)}{\phi(2N+1)} \leq C$$
whenever $\frac12\leq \frac{2M+1}{2N+1}\leq 2$.

For $1\leq p <\infty$ and $\phi \in \mathcal{G}_p$, the {\it generalized discrete
Morrey space} $\ell^p_\phi$ is defined as the set of all sequences
$x=(x_k)_{k=1}^\infty$ taking values in $\mathbb{K}$ such that
$$\|x\|_{\ell^p_\phi} := \sup_{m\in\mathbb{Z},N\in\omega} \frac{1}{\phi(2N+1)} \left(\frac{1}{|S_{m,N}|}
\sum_{k\in S_{m,N}} |x_k|^p\right)^\frac1p <\infty.$$
Note that the discrete Morrey space $\ell^p_q$ ($1\leq p\leq q <\infty$) may be obtained
from $\ell^p_\phi$ by choosing the function $\phi(2N+1)=(2N+1)^{-\frac1q},\ N\in\omega$.

The proof of our next proposition is similar to the proof of Proposition \ref{prop:2-2}.

\bigskip

\begin{proposition}
For $1\leq p <\infty$ and $\phi\in \mathcal{G}_p$, the mapping $\|\cdot\|_{\ell^p_\phi}$ defines a
norm on ${\ell^p_\phi}$. Moreover, $(\ell^p_q,\|\cdot\|_{\ell^p_\phi})$ is a Banach space.
\end{proposition}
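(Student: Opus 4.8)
The plan is to mirror the structure of Proposition \ref{prop:2-2}, since the statement asserts exactly the same two facts---that $\|\cdot\|_{\ell^p_\phi}$ is a norm and that $\ell^p_\phi$ is complete---but now with the generic weight $\phi(2N+1)$ in place of the power weight $|S_{m,N}|^{1/q-1/p}$. First I would verify the norm axioms. Nonnegativity is immediate from the definition, and $\|x\|_{\ell^p_\phi}=0 \iff x=0$ follows by taking $N=0$: since $S_{m,0}=\{m\}$ and $\phi(1)>0$, we have $\frac{1}{\phi(1)}|x_m| \le \|x\|_{\ell^p_\phi}$, so $\|x\|_{\ell^p_\phi}=0$ forces every $x_m=0$. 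Absolute homogeneity factors out of the inner $\ell^p$-average since $\phi$ does not depend on $x$. The triangle inequality follows exactly as before: for each fixed $m,N$, the inner expression $\frac{1}{\phi(2N+1)}\bigl(\frac{1}{|S_{m,N}|}\sum_{k\in S_{m,N}}|x_k+y_k|^p\bigr)^{1/p}$ is handled by Minkowski's inequality (the normalizing factors $\frac{1}{\phi(2N+1)}$ and $\frac{1}{|S_{m,N}|^{1/p}}$ are positive constants for fixed $m,N$), and then taking the supremum over $m,N$ gives subadditivity.

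For completeness, I would again follow the template of Proposition \ref{prop:2-2}. Given a Cauchy sequence $(x^{(n)})$ in $\ell^p_\phi$, the $N=0$ coordinate estimate shows that for each fixed $k$ the scalar sequence $(x^{(n)}_k)_n$ is Cauchy in $\mathbb{K}$; define $x_k:=\lim_{n\to\infty}x^{(n)}_k$ and set $x:=(x_k)_{k\in\Z}$. To see that $x^{(i)}\to x$ in $\ell^p_\phi$, fix $\varepsilon>0$ and choose $n_\varepsilon$ so that $\|x^{(i)}-x^{(j)}\|_{\ell^p_\phi}<\varepsilon$ for $i,j\ge n_\varepsilon$. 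For any fixed $m,N$ the set $S_{m,N}$ is finite, so
\[
\frac{1}{\phi(2N+1)}\left(\frac{1}{|S_{m,N}|}\sum_{k\in S_{m,N}}|x^{(i)}_k-x_k|^p\right)^{\frac1p}
=\lim_{j\to\infty}\frac{1}{\phi(2N+1)}\left(\frac{1}{|S_{m,N}|}\sum_{k\in S_{m,N}}|x^{(i)}_k-x^{(j)}_k|^p\right)^{\frac1p}
\le\varepsilon
\]
for $i\ge n_\varepsilon$, where the limit passes inside the finite sum and the bound is the one guaranteed by Cauchyness. Taking the supremum over $m,N$ yields $\|x^{(i)}-x\|_{\ell^p_\phi}\le\varepsilon$ for $i\ge n_\varepsilon$; in particular $x^{(i)}-x\in\ell^p_\phi$, so $x=x^{(i)}-(x^{(i)}-x)\in\ell^p_\phi$, and $x^{(i)}\to x$.

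I do not expect any genuine obstacle here, precisely because the weight $\phi(2N+1)$ enters as a fixed positive scalar once $m$ and $N$ are frozen, so every per-index estimate is identical to the $\ell^p_q$ case and none of the defining properties of $\mathcal{G}_p$ (almost monotonicity, doubling) are actually needed for the norm and completeness arguments---they are only there to make $\ell^p_\phi$ a sensible, nontrivial space. The only place to be slightly careful is the finiteness of $S_{m,N}$, which legitimizes interchanging the limit in $j$ with the finite inner sum; this is the same device used in Proposition \ref{prop:2-2} and in Proposition \ref{weak-completeness}. Given that the paper itself states ``The proof of our next proposition is similar to the proof of Proposition \ref{prop:2-2},'' I would present the argument compactly, emphasizing only the two points where $\phi$ replaces the power weight and indicating that the remaining verifications are routine.
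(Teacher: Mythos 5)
Your proposal is correct and is exactly the adaptation the paper intends: since the paper's proof of this proposition is given only by reference to Proposition \ref{prop:2-2}, and your argument reproduces that proof verbatim with the weight $\frac{1}{\phi(2N+1)}\cdot\frac{1}{|S_{m,N}|^{1/p}}$ in place of $|S_{m,N}|^{1/q-1/p}$ (using $N=0$ and $\phi(1)>0$ for definiteness and coordinatewise Cauchyness, Minkowski for the triangle inequality, and finiteness of $S_{m,N}$ to pass the limit in $j$ through the inner sum), there is nothing to add. Your closing observation that only positivity of $\phi$, and none of the $\mathcal{G}_p$ monotonicity conditions, is actually used here is also accurate.
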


\bigskip

The following lemma gives an estimate for the norm of the characteristic sequences, which will
be useful later on.

\bigskip

\begin{lemma}\label{lemma:characteristic}
Let $1\leq p < \infty$ and $\phi \in\mathcal{G}_p$. For $m_0\in\mathbb{Z}$ and
$N_0 \in \omega$, let $\xi^{m_0,N_0}$ be the characteristic sequence given by
\begin{equation}\label{eq:characteristic}
\xi^{m_0,N_0}_k :=\left\{
\begin{array}{ll}
1, &\text{if}\ k \in S_{m_0,N_0},\\
0, &\text{otherwise.}
\end{array}\right.
\end{equation}
Then there exists $C>0$, independent of $m_0$ and $N_0$, such that
$$\frac{1}{\phi({2N_0+1})} \leq \|\xi^{m_0,N_0}\|_{\ell^p_\phi} \le \frac{C}{\phi({2N_0+1})}$$
for every $N_0\in\omega$.
\end{lemma}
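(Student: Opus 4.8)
The plan is to reduce the computation of $\|\xi^{m_0,N_0}\|_{\ell^p_\phi}$ to an elementary supremum and then extract the two bounds from the two defining properties of $\mathcal{G}_p$. Writing $\xi=\xi^{m_0,N_0}$ and noting that $\sum_{k\in S_{m,N}}|\xi_k|^p=|S_{m,N}\cap S_{m_0,N_0}|$, the norm takes the form
$$\|\xi\|_{\ell^p_\phi}=\sup_{m\in\Z,\,N\in\w}\frac{1}{\phi(2N+1)}\left(\frac{|S_{m,N}\cap S_{m_0,N_0}|}{2N+1}\right)^{1/p}.$$
Since $S_{m,N}$ and $S_{m_0,N_0}$ are intervals of consecutive integers, their intersection has cardinality at most $\min\{2N+1,\,2N_0+1\}$, and this single observation drives both estimates while making the dependence on $m$ irrelevant.

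For the lower bound I would simply choose $m=m_0$ and $N=N_0$ inside the supremum. Then $S_{m,N}\cap S_{m_0,N_0}=S_{m_0,N_0}$, the bracketed ratio equals $1$, and the corresponding term is exactly $1/\phi(2N_0+1)$; since a supremum dominates any single term, this yields $\|\xi\|_{\ell^p_\phi}\ge 1/\phi(2N_0+1)$.

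The upper bound is the substantive part, and the key idea is to split the supremum at the threshold $N=N_0$ and invoke a different hypothesis on each side. For $N\le N_0$ I would bound the ratio trivially by $1$ and use the almost decreasing property, which (with $M=N\le N_0$) gives $\phi(2N+1)\ge C_1\,\phi(2N_0+1)$, so the term is at most $(1/C_1)/\phi(2N_0+1)$. For $N>N_0$ the ratio is no longer harmless, but the cardinality bound gives $|S_{m,N}\cap S_{m_0,N_0}|/(2N+1)\le(2N_0+1)/(2N+1)$; feeding this in, the term is controlled by
$$\frac{1}{\phi(2N+1)}\left(\frac{2N_0+1}{2N+1}\right)^{1/p}=\frac{1}{\phi(2N_0+1)}\cdot\frac{(2N_0+1)^{1/p}\phi(2N_0+1)}{(2N+1)^{1/p}\phi(2N+1)},$$
and the almost increasing property of $(2N+1)^{1/p}\phi(2N+1)$ (applied with $M=N_0\le N$) bounds the last fraction by the constant $C_2$. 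Taking $C=\max\{1/C_1,\,C_2\}$ and passing to the supremum over both regimes completes the estimate.

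The only real obstacle is recognizing that neither property of $\mathcal{G}_p$ alone suffices: the almost decreasing hypothesis controls the regime $N\le N_0$, where the measure ratio is uninformative, while the almost increasing hypothesis on $(2N+1)^{1/p}\phi(2N+1)$ controls the regime $N>N_0$, where one must exploit the decay factor $(2N_0+1)/(2N+1)$. Once the supremum is split at $N_0$, each piece matches exactly one hypothesis, and since only the intersection cardinality enters, the resulting constant $C$ is manifestly independent of $m_0$ and $N_0$.
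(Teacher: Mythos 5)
Your proof is correct and takes essentially the same route as the paper's: the lower bound comes from evaluating the supremum at $(m,N)=(m_0,N_0)$, and the upper bound from splitting at $N=N_0$, using the almost decreasing property of $\phi$ when $N\le N_0$ and the almost increasing property of $(2N+1)^{1/p}\phi(2N+1)$ (together with the cardinality bound $2N_0+1$ on the intersection) when $N>N_0$, ending with $C=\max\{1/C_1,\,C_2\}$. The only difference is cosmetic: you phrase the sums as intersection cardinalities $|S_{m,N}\cap S_{m_0,N_0}|$, which is a slightly cleaner bookkeeping of what the paper does with sums over $S_{m_0,N}$ and $S_{m_0,N_0}$.
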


\begin{proof}
We fix $m_0\in\mathbb{Z}$ and $N_0 \in \omega$. Then we have
\begin{align*}
 \|\xi^{m_0,N_0}\|_{\ell^p_\phi} &= \sup_{m\in\mathbb{Z},N\in\omega} \frac{1}{\phi(2N+1)}
 \left(\frac{1}{|S_{m,N}|}
 \sum_{k\in S_{m,N}} |\xi^{m_0,N_0}_k|^p\right)^\frac1p \\
 &\geq \frac{1}{\phi(2N_0+1)}\left(\frac{|S_{m_0,N_0}|}{|S_{m_0,N_0}|} \right)^\frac1p =
 \frac{1}{\phi(2N_0+1)}.
\end{align*}
For the second inequality, take any $m\in\Z$ and $N\in\omega$. If $N\le N_0$, we use the fact that
$\phi$ is almost decreasing: there exists $C_1>0$ such that $\phi(2N+1)\ge C_1\, \phi(2N_0+1)$,
and that $\sum\limits_{k\in S_{m_0,N}} |\xi^{m_0,N_0}_k|^p = |S_{m_0,N}|.$
Hence
\begin{align*}
\frac{1}{\phi(2N+1)} \left(\frac{1}{|S_{m,N}|} \sum_{k\in S_{m,N}} |\xi^{m_0,N_0}_k|^p\right)^\frac1p
&\le \frac{1}{\phi(2N+1)} \left(\frac{1}{|S_{m_0,N}|} \sum_{k\in S_{m_0,N}} |\xi^{m_0,N_0}_k|^p\right)^\frac1p\\
&= \frac{1}{\phi(2N+1)} \left(\frac{|S_{m_0,N}|}{|S_{m_0,N}|}\right)^\frac1p\le \frac{1}{C_1\phi(2N_0+1)}.
\end{align*}
If $N\ge N_0$, there exists $C_2>0$ such that $(2N_0+1)^\frac1p \phi(2N_0+1) \le C_2\, (2N+1)^\frac1p
\phi(2N+1)$. In this case, we have
\begin{align*}
 \frac{1}{\phi(2N+1)} \left(\frac{1}{|S_{m,N}|} \sum_{k\in S_{m,N}} |\xi^{m_0,N_0}_k|^p\right)^\frac1p
 &\le \frac{C_2(2N+1)^\frac1p }{(2N_0+1)^{\frac1p}\phi(2N_0+1)} \left(\frac{|S_{m_0,N_0}|}{|S_{m_0,N}|}\right)^\frac1p\\
&= \frac{C_2}{\phi(2N_0+1)}.
\end{align*}
The constants $C_1$ and $C_2$ are independent of $m_0$, $m$, $N_0$, and $N$.
Taking the supremum over $m\in\Z$ and $N\in\omega$,
we get $\|\xi^{m_0,N_0}\|_{\ell^p_\phi}\le \frac{C}{\phi(2N_0+1)}$,
where $C=\max\{\frac{1}{C_1},C_2\}$. This completes the proof.
\end{proof}

\medskip

Let $X\not=\emptyset$. For $f,g:X\to \R$, we write $f\lesssim g$ (or $g\gtrsim f$) if
there exists a constant $C>0$ such that $f(x)\le C\,g(x)$ for every $x\in X$.

\bigskip

\begin{theorem}\label{strongequivalence}
Let $1\leq p_1 \leq p_2 <\infty$, $\phi_1 \in \mathcal{G}_{p_1}$, and $\phi_2 \in \mathcal{G}_{p_2}$.
Then the following statements are equivalent:
\begin{enumerate}[(i)]
\item $\phi_2 \lesssim \phi_1$ (on $2\omega+1$).
\item $\|\cdot\|_{\ell^{p_1}_{\phi_1}}\lesssim \|\cdot\|_{\ell^{p_2}_{\phi_2}}$ (on $\ell^{p_2}_{\phi_2}$).
\item $\ell^{p_2}_{\phi_2} \subseteq \ell^{p_1}_{\phi_1}$.
\end{enumerate}
\end{theorem}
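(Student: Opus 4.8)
The plan is to prove the theorem by establishing the cyclic chain of implications (i) $\Rightarrow$ (ii) $\Rightarrow$ (iii) $\Rightarrow$ (i). The first implication is a quantitative comparison of the two norms built from tools already in hand, the second is essentially tautological, and the third --- recovering the pointwise domination of $\phi_2$ by $\phi_1$ from a mere set-theoretic inclusion --- is where the real work lies.

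For (i) $\Rightarrow$ (ii), I would assume $\phi_2(2N+1)\le C\,\phi_1(2N+1)$ for all $N\in\omega$. Since $p_1\le p_2$, Lemma \ref{lemma:monotonicity} gives, for each $m\in\Z$ and $N\in\omega$,
$$
\left(\frac{1}{|S_{m,N}|}\sum_{k\in S_{m,N}}|x_k|^{p_1}\right)^{1/p_1}\le
\left(\frac{1}{|S_{m,N}|}\sum_{k\in S_{m,N}}|x_k|^{p_2}\right)^{1/p_2}.
$$
Multiplying through by $1/\phi_1(2N+1)$ and invoking $1/\phi_1(2N+1)\le C/\phi_2(2N+1)$, the $\phi_1$-weighted $p_1$-average is bounded by $C$ times the $\phi_2$-weighted $p_2$-average; taking the supremum over $m$ and $N$ yields $\|x\|_{\ell^{p_1}_{\phi_1}}\le C\,\|x\|_{\ell^{p_2}_{\phi_2}}$, which is (ii). The implication (ii) $\Rightarrow$ (iii) is immediate: if the $\ell^{p_1}_{\phi_1}$-norm is controlled by a finite multiple of the $\ell^{p_2}_{\phi_2}$-norm, then every $x$ with $\|x\|_{\ell^{p_2}_{\phi_2}}<\infty$ automatically satisfies $\|x\|_{\ell^{p_1}_{\phi_1}}<\infty$, so $\ell^{p_2}_{\phi_2}\subseteq\ell^{p_1}_{\phi_1}$.

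The crux is (iii) $\Rightarrow$ (i). I would first upgrade the inclusion to a bounded embedding via the closed graph theorem. Both spaces are Banach (by the generalized analogue of Proposition \ref{prop:2-2}), so the inclusion $\iota:\ell^{p_2}_{\phi_2}\to\ell^{p_1}_{\phi_1}$ is a linear map between Banach spaces. To see its graph is closed, observe that convergence in either norm forces coordinatewise convergence: specializing the defining supremum to $m=k_0$, $N=0$ gives $S_{k_0,0}=\{k_0\}$ and hence $|x_{k_0}|\le\phi(1)\,\|x\|_{\ell^p_\phi}$ for each coordinate. Thus if $x^{(n)}\to x$ in $\ell^{p_2}_{\phi_2}$ and $x^{(n)}\to y$ in $\ell^{p_1}_{\phi_1}$, the two limits agree coordinatewise, so $x=y$ and the graph is closed. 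The closed graph theorem then provides a constant $C>0$ with $\|x\|_{\ell^{p_1}_{\phi_1}}\le C\,\|x\|_{\ell^{p_2}_{\phi_2}}$ for all $x\in\ell^{p_2}_{\phi_2}$.

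Finally, I would feed the characteristic sequences into this inequality. Applying the bound to $\xi^{m_0,N_0}$ and invoking Lemma \ref{lemma:characteristic} on each side gives
$$
\frac{1}{\phi_1(2N_0+1)}\le\|\xi^{m_0,N_0}\|_{\ell^{p_1}_{\phi_1}}\le C\,\|\xi^{m_0,N_0}\|_{\ell^{p_2}_{\phi_2}}\le\frac{C\,C'}{\phi_2(2N_0+1)},
$$
where $C'$ is the upper-estimate constant from Lemma \ref{lemma:characteristic} for $\ell^{p_2}_{\phi_2}$. Rearranging yields $\phi_2(2N_0+1)\le C\,C'\,\phi_1(2N_0+1)$ for every $N_0\in\omega$, that is, $\phi_2\lesssim\phi_1$, which is (i). The main obstacle throughout is precisely this last direction: the passage from set inclusion to norm domination rests on the closed graph theorem, and the recovery of the pointwise estimate for $\phi_2/\phi_1$ hinges on the \emph{two-sided} characteristic-sequence bound of Lemma \ref{lemma:characteristic}, which is exactly why that lemma was isolated in advance.
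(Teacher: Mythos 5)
Your proof is correct, and its overall architecture matches the paper's: (i) $\Rightarrow$ (ii) via Lemma \ref{lemma:monotonicity}, the trivial passage to (iii), a Baire-category argument to upgrade the set inclusion (iii) to a norm estimate, and the two-sided characteristic-sequence bounds of Lemma \ref{lemma:characteristic} to recover the pointwise estimate (i). The one place you genuinely diverge is the upgrade step. The paper introduces the auxiliary norm $|||x|||:=\|x\|_{\ell^{p_1}_{\phi_1}}+\|x\|_{\ell^{p_2}_{\phi_2}}$ on $\ell^{p_2}_{\phi_2}$, asserts (deferring to \cite{Os}) that $(\ell^{p_2}_{\phi_2},|||\cdot|||)$ is complete, and applies the Open Mapping Theorem to the identity onto $(\ell^{p_2}_{\phi_2},\|\cdot\|_{\ell^{p_2}_{\phi_2}})$. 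You instead apply the Closed Graph Theorem directly to the inclusion $\iota:\ell^{p_2}_{\phi_2}\to\ell^{p_1}_{\phi_1}$, using the coordinatewise bound $|x_{k_0}|\leq\phi(1)\,\|x\|_{\ell^p_\phi}$ (obtained by taking $m=k_0$, $N=0$) to verify that the graph is closed. The two theorems are equivalent, but your version is more self-contained: the completeness of the sum norm that the paper outsources requires essentially the same coordinatewise-convergence argument you make explicit (a $|||\cdot|||$-Cauchy sequence converges in each norm separately, and one must check the two limits agree), and you avoid introducing the auxiliary norm altogether. Finally, your organization as a cycle (i) $\Rightarrow$ (ii) $\Rightarrow$ (iii) $\Rightarrow$ (i), versus the paper's pair of equivalences (i) $\Leftrightarrow$ (ii) and (ii) $\Leftrightarrow$ (iii), is an immaterial difference, since your proof of (iii) $\Rightarrow$ (i) factors through the norm estimate (ii) in any case.
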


\begin{proof}
We first prove that (i) and (ii) are equivalent. Suppose that (i) holds. Let $x \in  \ell^{p_2}_{\phi_2}$.
For any $m\in\mathbb{Z}$ and $N \in \omega$, we have
\begin{align*}
\frac{1}{\phi_1(2N+1)} \left(\frac{1}{|S_{m,N}|} \sum_{k\in S_{m,N}} |x_k|^{p_1}\right)^\frac1{p_1}
&\le \frac{C}{\phi_2(2N+1)}\left(\frac{1}{|S_{m,N}|} \sum_{k\in S_{m,N}} |x_k|^{p_1}\right)^\frac1{p_1}\\
&\le \frac{C}{\phi_2(2N+1)}\left(\frac{1}{|S_{m,N}|} \sum_{k\in S_{m,N}} |x_k|^{p_2}\right)^\frac1{p_2},
\end{align*}
for some $C>0$ (independent of $m$ and $N$). Note the use of Lemma \ref{lemma:monotonicity}
in the last inequality. Taking the supremum over $m\in\mathbb{Z}$ and $N \in \omega$,
we obtain $\|\cdot\|_{\ell^{p_1}_{\phi_1}} \lesssim \|\cdot\|_{\ell^{p_2}_{\phi_2}}$
(on $\ell^{p_2}_{\phi_2}$). Thus (ii) holds.

Next suppose that (ii) holds. Let $m_0\in\mathbb{Z}$, $N_0 \in \omega$, and
$\xi^{m_0,N_0}$ be the characteristic sequence
defined by \eqref{eq:characteristic} in Lemma \ref{lemma:characteristic}. By our assumption,
$\|\xi^{m_0,N_0}\|_{\ell^{p_1}_{\phi_1}} \le C_1\, \|\xi^{m_0,N_0}\|_{\ell^{p_2}_{\phi_2}}$
for some $C_1>0$. Meanwhile, Lemma \ref{lemma:characteristic} gives us
$$\frac{1}{\phi_1(2N_0+1)} \leq \|\xi^{m_0,N_0}\|_{\ell^{p_1}_{\phi_1}}\quad \text{and}
\quad \|\xi^{m_0,N_0}\|_{\ell^{p_2}_{\phi_2}} \le \frac{C_2}{\phi_2(2N_0+1)},$$
for some $C_2>0$. Both $C_1$ and $C_2$ are independent of $m_0$ and $N_0$. We conclude that
$$\frac{1}{\phi_1(2N_0+1)} \le \frac{C_1C_2}{\phi_2(2N_0+1)},\quad \text{or equivalently,}
\quad \phi_2 (2N_0+1)\le C_1C_2\phi_1(2N_0+1);$$
and this tells us that (i) holds since the above inequality holds for any $N_0\in \mathbb{N}$.

We shall now prove that (ii) and (iii) are equivalent. But (ii) clearly implies (iii),
and so it remains only to show that (iii) implies (ii).

For any $x\in\ell_{\phi_{2}}^{p_{2}}$, we have $\|x\|_{\ell_{\phi_{2}}^{p_{2}}}<\infty$
and by assumption we also know that $\|x\|_{\ell_{\phi_{1}}^{p_{1}}}<\infty$.
Define $|||x|||:=\|x\|_{\ell_{\phi_{1}}^{p_{1}}}+\|x\|_{\ell_{\phi_{2}}^{p_{2}}}$
for every $x\in\ell_{\phi_{2}}^{p_{2}}$.
Note that $|||\cdot|||$ is a norm on $\ell_{\phi_{2}}^{p_{2}}$. Moreover, as in \cite{Os},
one may verify that $(\ell_{\phi_{2}}^{p_{2}},|||\cdot|||)$ is a Banach space.

Now consider the identity mapping $I:(\ell_{\phi_{2}}^{p_{2}},|||\cdot|||)\rightarrow
(\ell_{\phi_{2}}^{p_{2}},||\cdot||_{\ell_{\phi_{2}}^{p_{2}}})$. If $x^{(n)}\rightarrow x$
in $(\ell_{\phi_{2}}^{p_{2}},|||\cdot|||)$, then $x^{(n)}\rightarrow x$ in
$(\ell_{\phi_{2}}^{p_{2}},\|\cdot\|_{\ell_{\phi_{2}}^{p_{2}}})$ as well, since
$\|\cdot\|_{\ell_{\phi_{2}}^{p_{2}}}\leq|||\cdot|||$. This tells us that $I$ is a continuous linear operator.
Evidently, $(\ell_{\phi_{2}}^{p_{2}},|||\cdot|||)$ is a closed subspace of
$(\ell_{\phi_{2}}^{p_{2}},\|\cdot\|_{\ell_{\phi_{2}}^{p_{2}}})$. It follows from the
Open Mapping Theorem that $I$ is open. Since $I$ is bijective, we know that $I^{-1}$ is
continuous, and hence bounded. It follows that there exists $C>0$ such that
$|||x|||\leq C\|x\|_{\ell_{\phi_{2}}^{p_{2}}}$ for every $x\in\ell_{\phi_{2}}^{p_{2}}$. Therefore,
\[
\|x\|_{\ell_{\phi_{1}}^{p_{1}}}\leq|||x|||\leq C\|x\|_{\ell_{\phi_{2}}^{p_{2}}}.
\]
for every $x\in\ell_{\phi_{2}}^{p_{2}}$. This completes the proof.
\end{proof}

\section{Generalized Weak Type Discrete Morrey Spaces}

For $1\leq p <\infty$ and $\phi \in \mathcal{G}_p$, the {\it generalized weak type
discrete Morrey space} $w\ell^p_\phi$ is the set of all sequences
$x=(x_k)_{k\in\mathbb{Z}}$ taking values in $\mathbb{K}$ such that
$\|x\|_{w\ell^p_\phi}< \infty$, where $\|\cdot\|_{w\ell^p_\phi}$ is defined by
$$\|x\|_{w\ell^p_\phi } := \sup_{m\in\mathbb{Z}, N \in\omega, \gamma>0 } \frac{\gamma}{\phi(2N+1)}
\left( \frac{\big|\{k\in S_{m,N}:\ |x_k| >\gamma\}\big|}{|S_{m,N}|}\right)^\frac1p.$$

The proof of our next proposition is similar to the proof of Proposition \ref{weak-completeness}.

\bigskip

\begin{proposition}
For $1\leq p <\infty$ and $\phi \in \mathcal{G}_p$, $(w\ell^p_\phi, \|\cdot\|_{w\ell^p_\phi})$
is a quasi-Banach space.
\end{proposition}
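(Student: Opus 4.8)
The plan is to establish two things: that $\|\cdot\|_{w\ell^p_\phi}$ is a genuine quasi-norm, and that $w\ell^p_\phi$ is complete under it. Both arguments run parallel to those already given for $w\ell^p_q$, the only structural change being that the weight $|S_{m,N}|^{\frac1q-\frac1p}$ is replaced throughout by $\frac{1}{\phi(2N+1)\,|S_{m,N}|^{1/p}}$, which for fixed $m$ and $N$ is still a fixed positive constant, since $\phi$ takes values in $(0,\infty)$.

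First I would verify the quasi-norm axioms. Nonnegativity is immediate, and $\|x\|_{w\ell^p_\phi}=0$ forces $x=0$ by taking $N=0$, so that $S_{m,0}=\{m\}$ and hence $|x_m|\le\gamma$ for every $\gamma>0$. Homogeneity follows from the substitution $\delta=\gamma/|\alpha|$ exactly as before. For the quasi-triangle inequality I would use the set inclusion
$$\{k\in S_{m,N}:|x_k+y_k|>\gamma\}\subseteq\{k\in S_{m,N}:|x_k|>\tfrac{\gamma}{2}\}\cup\{k\in S_{m,N}:|y_k|>\tfrac{\gamma}{2}\}$$
together with subadditivity of the counting measure. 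Since the common factor $\frac{1}{\phi(2N+1)}$ plays no role in the set manipulations, the same computation yields $\|x+y\|_{w\ell^p_\phi}\le 2\big(\|x\|_{w\ell^p_\phi}+\|y\|_{w\ell^p_\phi}\big)$, with the constant $2$ independent of $\phi$.

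For completeness I would follow Proposition \ref{weak-completeness} step by step. Given a Cauchy sequence $(x^{(n)})_{n\in\N}$, it is bounded, say $\|x^{(n)}\|_{w\ell^p_\phi}\le C$. Taking $N=0$ isolates individual coordinates: there the weight equals the fixed constant $1/\phi(1)$, so the Cauchy condition forces each $(x^{(n)}_k)_{n\in\N}$ to be Cauchy in $\mathbb{K}$, and one sets $x_k:=\lim_n x^{(n)}_k$. To see that $x\in w\ell^p_\phi$, fix $m,N,\gamma$ and split $\{|x_k|>\gamma\}$ into $\{|x_k-x^{(n_0)}_k|>\gamma/2\}$ and $\{|x^{(n_0)}_k|>\gamma/2\}$; since $S_{m,N}$ is finite, choosing $n_0$ large makes the first set empty on $S_{m,N}$, leaving a bound by $\|x^{(n_0)}\|_{w\ell^p_\phi}\le C$. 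Finally, quasi-norm convergence $x^{(n)}\to x$ follows from the identity
$$\{k\in S_{m,N}:|x_k-x^{(j)}_k|>\gamma\}=\bigcup_{M=1}^\infty\bigcap_{i=M}^\infty\{k\in S_{m,N}:|x^{(i)}_k-x^{(j)}_k|>\gamma\}$$
and the continuity of the counting measure, combined with the Cauchy property to bound the tail uniformly in $m,N$, and $\gamma$.

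The one point requiring care --- and really the only place where the switch from $\ell^p_q$ to $\ell^p_\phi$ matters --- is confirming that $\phi$ enters solely as a multiplicative weight that does not interfere with any of the set-theoretic estimates. In particular, no monotonicity or doubling property of $\phi$ is needed here: for each fixed $N$ the quantity $1/\phi(2N+1)$ is merely a positive constant, so it cancels between the two sides of every inequality and passes freely through the limits in $M$ and $n_0$. This is precisely why the proof transfers essentially verbatim from the $w\ell^p_q$ case.
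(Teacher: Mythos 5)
Your proof is correct and takes essentially the same approach the paper intends: the paper dispatches this proposition by noting its proof is similar to that of Proposition \ref{weak-completeness} (together with the quasi-norm verification given for $w\ell^p_q$), and your write-up carries out precisely that transfer, replacing the weight $|S_{m,N}|^{\frac1q-\frac1p}$ by $\frac{1}{\phi(2N+1)\,|S_{m,N}|^{1/p}}$ throughout. Your closing observation---that only positivity of $\phi$, and none of the almost-monotonicity or doubling properties of $\mathcal{G}_p$, is used here---is also accurate.
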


\bigskip

\begin{proposition}\label{prop:weak-inclusion-generalised}
For $1\leq p <\infty$ and $\phi \in \mathcal{G}_p$, $\ell^p_\phi \subseteq w\ell^p_\phi$
with $\|x\|_{w\ell^p_\phi} \le \|x\|_{\ell^p_\phi}$ for every $x\in \ell^p_\phi$.
\end{proposition}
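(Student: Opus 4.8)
The plan is to mimic the proof of Theorem \ref{thm:weak-contains-strong}, which is really just a Chebyshev-type estimate, and to observe that the generalizing factor $1/\phi(2N+1)$ plays no essential role beyond being a common positive multiplier appearing on both sides at a fixed $N$. First I would fix an arbitrary $x\in\ell^p_\phi$ together with arbitrary $m\in\mathbb{Z}$, $N\in\omega$, and $\gamma>0$, and set $A:=\{k\in S_{m,N}:|x_k|>\gamma\}$. The heart of the argument is the elementary observation that on $A$ every term satisfies $\gamma^p<|x_k|^p$, so that
$$
\gamma^p|A|=\sum_{k\in A}\gamma^p\le\sum_{k\in A}|x_k|^p\le\sum_{k\in S_{m,N}}|x_k|^p.
$$

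Next I would divide by $|S_{m,N}|$, take $p$-th roots, and multiply by $1/\phi(2N+1)$ to obtain
$$
\frac{\gamma}{\phi(2N+1)}\left(\frac{|A|}{|S_{m,N}|}\right)^{1/p}\le\frac{1}{\phi(2N+1)}\left(\frac{1}{|S_{m,N}|}\sum_{k\in S_{m,N}}|x_k|^p\right)^{1/p}\le\|x\|_{\ell^p_\phi},
$$
where the final inequality is precisely the definition of $\|x\|_{\ell^p_\phi}$ as a supremum over $m$ and $N$. Since the right-hand side does not depend on the particular choice of $m$, $N$, or $\gamma$, I would then take the supremum of the left-hand side over all $m\in\mathbb{Z}$, $N\in\omega$, and $\gamma>0$ to conclude that $\|x\|_{w\ell^p_\phi}\le\|x\|_{\ell^p_\phi}$. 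In particular every $x\in\ell^p_\phi$ lies in $w\ell^p_\phi$, which gives the stated inclusion.

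I do not expect any genuine obstacle here. The function $\phi$ enters only through the factor $1/\phi(2N+1)$, which is positive and identical on both sides once $N$ is fixed, so none of the structural hypotheses on $\phi\in\mathcal{G}_p$ (almost monotonicity or the doubling condition) are actually invoked. The only point requiring a word of care is the degenerate case $A=\varnothing$, where the left-hand side is zero and the inequality holds trivially; apart from that, the argument is a verbatim adaptation of the Chebyshev estimate already carried out for the inclusion $\ell^p_q\subseteq w\ell^p_q$.
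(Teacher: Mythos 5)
Your proof is correct and is essentially identical to the paper's own argument: both rest on the Chebyshev-type estimate $\gamma^p|A|\le\sum_{k\in S_{m,N}}|x_k|^p$ for $A=\{k\in S_{m,N}:|x_k|>\gamma\}$, followed by dividing by $|S_{m,N}|$, taking $p$-th roots, multiplying by the common factor $1/\phi(2N+1)$, and passing to the supremum over $m$, $N$, and $\gamma$. Your observation that the hypotheses on $\phi\in\mathcal{G}_p$ play no role here also matches the paper, whose proof never invokes them.
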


\begin{proof}
Let $x\in \ell^p_\phi$, $m\in\mathbb{Z}$, $N\in \omega$, and $\gamma >0$. We have
\begin{align*}
\frac{\gamma}{\phi(2N+1)}\left( \frac{\big|\{k\in S_{m,N}:\ |x_k| >\gamma\}\big|}{|S_{m,N}|}\right)^\frac1p
&= \frac{1}{\phi(2N+1)}\left( \frac{\gamma^p\big|\{k\in S_{m,N}:\ |x_k| >\gamma\}\big|}{|S_{m,N}|}\right)^\frac1p\\
&= \frac{1}{\phi(2N+1)}\left( \frac{1}{|S_{m,N}|}\sum_{k\in S_{m,N},|x_k| >\gamma} \gamma^p\right)^\frac1p\\
&\le \frac{1}{\phi(2N+1)}\left( \frac{1}{|S_{m,N}|}\sum_{k\in S_{m,N},|x_k| >\gamma} |x_k|^p\right)^\frac1p\\
&\le \frac{1}{\phi(2N+1)}\left( \frac{1}{|S_{m,N}|}\sum_{k\in S_{m,N}} |x_k|^p\right)^\frac1p.
\end{align*}
Taking the supremum over $m\in\mathbb{Z}$, $N\in \omega$, and $\gamma>0$, we obtain
$\|x\|_{w\ell^p_\phi} \leq\|x\|_{\ell^p_\phi}$. Therefore, $\ell^p_\phi \subseteq w\ell^p_\phi$.
\end{proof}

\bigskip

\begin{lemma}\label{lemma:characteristic-weak}
Let $1\leq p < \infty$ and $\phi \in\mathcal{G}_p$. If $m_0\in\mathbb{Z}$ and $N_0 \in \omega$, and
$\xi^{m_0,N_0}$ is the characteristic sequence defined by \eqref{eq:characteristic} in Lemma
\ref{lemma:characteristic}, then there exists $C>0$, independent of $m_0$ and $N_0$, such that
$$\frac{1}{2\phi(2N_0+1)} \leq \|\xi^{m_0,N_0}\|_{w\ell^p_\phi} \le \frac{C}{\phi(2N_0+1)}.$$
\end{lemma}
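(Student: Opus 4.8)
The plan is to handle the two inequalities separately: the upper bound comes essentially for free from results already established in this section, while the lower bound requires only a single judicious choice of parameters.

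For the upper bound, I would chain together the two facts proved just before this lemma. Since $\xi^{m_0,N_0}$ is bounded and has finite support, it lies in $\ell^p_\phi$, so Proposition \ref{prop:weak-inclusion-generalised} gives $\|\xi^{m_0,N_0}\|_{w\ell^p_\phi} \le \|\xi^{m_0,N_0}\|_{\ell^p_\phi}$. Lemma \ref{lemma:characteristic} then bounds the right-hand side by $C/\phi(2N_0+1)$ with $C$ independent of $m_0$ and $N_0$. Composing the two inequalities yields the desired upper bound with the same constant $C$. (Alternatively, one can argue directly: for $\gamma \ge 1$ the counting set $\{k\in S_{m,N}:|\xi^{m_0,N_0}_k|>\gamma\}$ is empty, and for $0<\gamma<1$ the term in the supremum is strictly dominated by $\frac{1}{\phi(2N+1)}\bigl(\frac{1}{|S_{m,N}|}\sum_{k\in S_{m,N}}|\xi^{m_0,N_0}_k|^p\bigr)^{1/p}$, which is exactly the quantity controlled in the strong-type lemma; this recovers the same bound.)

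For the lower bound, I would return to the definition of $\|\cdot\|_{w\ell^p_\phi}$ and test it against one specific triple. Taking $m=m_0$, $N=N_0$, and $\gamma=\tfrac12$, the set $\{k\in S_{m_0,N_0}:|\xi^{m_0,N_0}_k|>\tfrac12\}$ is all of $S_{m_0,N_0}$, since the sequence equals $1$ there; hence the ratio $|S_{m_0,N_0}|/|S_{m_0,N_0}|=1$ and the corresponding term equals $\frac{1/2}{\phi(2N_0+1)}$. Because $\|\xi^{m_0,N_0}\|_{w\ell^p_\phi}$ is a supremum that includes this single term, the lower bound $\frac{1}{2\phi(2N_0+1)}$ follows at once.

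The only point worth flagging is the factor $2$ in the lower bound, which is absent from the strong-type Lemma \ref{lemma:characteristic}. It arises because the distribution-function formulation uses the strict threshold $|\xi_k|>\gamma$: as $\xi^{m_0,N_0}$ takes only the values $0$ and $1$, the counting set is nonempty precisely for $0<\gamma<1$, and the factor $\gamma$ can be pushed toward $1$ only in the limit. Fixing the concrete value $\gamma=\tfrac12$ sidesteps this non-attainment and delivers the clean constant $\tfrac12$; any fixed $\gamma\in(0,1)$ would serve, giving the constant $\gamma$ in its place. There is no genuine obstacle here — the entire content of the lemma rests on Lemma \ref{lemma:characteristic} together with Proposition \ref{prop:weak-inclusion-generalised}.
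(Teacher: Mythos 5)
Your proposal is correct and follows essentially the same route as the paper: the lower bound by testing the supremum at $m=m_0$, $N=N_0$, $\gamma=\tfrac12$, and the upper bound by chaining Proposition \ref{prop:weak-inclusion-generalised} with Lemma \ref{lemma:characteristic}. One small quibble with your closing remark: since $\|\cdot\|_{w\ell^p_\phi}$ is a supremum (attainment is not required), letting $\gamma\uparrow 1$ would in fact yield the sharper lower bound $\frac{1}{\phi(2N_0+1)}$, so the factor $\tfrac12$ is a convenience of the fixed choice $\gamma=\tfrac12$ rather than a genuine necessity.
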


\begin{proof}
Let $m_0\in\mathbb{Z}$ and $N_0\in\omega$. By definition, we have
\begin{align*}
\|\xi^{m_0,N_0}\|_{w\ell^p_\phi} &= \sup_{m\in\mathbb{Z}, N \in\omega, \gamma>0 } \frac{\gamma}{\phi(2N+1)}
\left( \frac{\big|\{k\in S_{m,N}:\ |\xi^{m_0,N_0}_k| >\gamma\}\big|}{|S_{m,N}|}\right)^\frac1p\\
&\geq \frac{\frac12}{\phi(2N_0+1)}\left( \frac{\big|\{k\in S_{m_0,N_0}:\ |\xi_k^{m_0,N_0}| >\frac12\}
\big|}{|S_{m_0,N_0}|}\right)^\frac1p\\
&\geq \frac{1}{2\phi(2N_0+1)}\left( \frac{|S_{m_0,N_0}|}{|S_{m_0,N_0}|}\right)^\frac1p= \frac{1}{2\phi(2N_0+1)}.
\end{align*}
Next, by using Lemma \ref{lemma:characteristic} and Proposition \ref{prop:weak-inclusion-generalised},
there exists $C>0$ independent of $m_0$ and $N_0$ such that
$$\|\xi^{m_0,N_0}\|_{w\ell^p_\phi} \leq \|\xi^{m_0,N_0}\|_{\ell^p_\phi}\le \frac{C}{\phi(2N_0+1)},$$
and this completes the proof.
\end{proof}

\bigskip

\begin{theorem}\label{weakequivalence}
Let $1\leq p_1 \leq p_2 <\infty$, $\phi_1 \in \mathcal{G}_{p_1}$ and $\phi_2 \in
\mathcal{G}_{p_2}$. Then the following statements are equivalent:
\begin{enumerate}[(i)]
\item $\phi_2 \lesssim \phi_1$ (on $2\omega+1$).
\item $\|\cdot\|_{w\ell^{p_1}_{\phi_1}} \lesssim \|\cdot\|_{w\ell^{p_2}_{\phi_2}}$
(on $w\ell^{p_2}_{\phi_2}$).
\item $w\ell^{p_2}_{\phi_2} \subseteq w\ell^{p_1}_{\phi_1}$.
\end{enumerate}
\end{theorem}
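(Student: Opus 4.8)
The plan is to mirror the structure of the proof of Theorem \ref{strongequivalence}, adapting each step to the weak-type setting. I would prove the implications in the cycle $(i)\Rightarrow(ii)\Rightarrow(iii)\Rightarrow(ii)$ together with the easy observation that $(ii)\Rightarrow(iii)$ is immediate, exactly as in the strong case. The key technical tools available are Lemma \ref{lemma:characteristic-weak} (the weak-type characteristic-sequence estimate) in place of Lemma \ref{lemma:characteristic}, and the monotonicity argument embedded in the inclusion proposition for weak spaces.

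For $(i)\Rightarrow(ii)$, I would fix $x\in w\ell^{p_2}_{\phi_2}$, $m\in\mathbb{Z}$, $N\in\omega$, and $\gamma>0$, and estimate the defining quantity for $\|\cdot\|_{w\ell^{p_1}_{\phi_1}}$. First, using $\phi_2\lesssim\phi_1$, I would replace $\tfrac{1}{\phi_1(2N+1)}$ by $\tfrac{C}{\phi_2(2N+1)}$. The remaining task is to pass from the exponent $1/p_1$ to the exponent $1/p_2$ on the counting term. Here I would reuse precisely the device from the proof of the weak inclusion Proposition (the one showing $w\ell^{p_2}_q\subseteq w\ell^{p_1}_q$): solving the $w\ell^{p_2}_{\phi_2}$ bound for $\gamma$ and substituting, one finds that the ratio $\bigl(|\{k\in S_{m,N}:|x_k|>\gamma\}|/|S_{m,N}|\bigr)^{1/p_1-1/p_2}$ appears and is bounded above by $1$ since that ratio lies in $[0,1]$ and $1/p_1-1/p_2\ge 0$. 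Taking the supremum over $m,N,\gamma$ then yields $\|x\|_{w\ell^{p_1}_{\phi_1}}\le C\|x\|_{w\ell^{p_2}_{\phi_2}}$, i.e.\ (ii).

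For $(ii)\Rightarrow(i)$, I would test (ii) on the characteristic sequences $\xi^{m_0,N_0}$. Applying Lemma \ref{lemma:characteristic-weak} gives $\tfrac{1}{2\phi_1(2N_0+1)}\le\|\xi^{m_0,N_0}\|_{w\ell^{p_1}_{\phi_1}}$ and $\|\xi^{m_0,N_0}\|_{w\ell^{p_2}_{\phi_2}}\le\tfrac{C}{\phi_2(2N_0+1)}$; combining these with the hypothesis $\|\xi^{m_0,N_0}\|_{w\ell^{p_1}_{\phi_1}}\lesssim\|\xi^{m_0,N_0}\|_{w\ell^{p_2}_{\phi_2}}$ yields $\tfrac{1}{\phi_1(2N_0+1)}\lesssim\tfrac{1}{\phi_2(2N_0+1)}$, which is exactly $\phi_2\lesssim\phi_1$ after rearrangement, since $N_0\in\omega$ is arbitrary. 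The only change from the strong case is the harmless factor $2$ in the lower bound, which is absorbed into the implied constant.

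Finally, for the equivalence of (ii) and (iii): as before $(ii)\Rightarrow(iii)$ is trivial, and for $(iii)\Rightarrow(ii)$ I would run the same closed-graph/open-mapping argument used in Theorem \ref{strongequivalence}, forming the auxiliary quasi-norm $|||x|||:=\|x\|_{w\ell^{p_1}_{\phi_1}}+\|x\|_{w\ell^{p_2}_{\phi_2}}$ on $w\ell^{p_2}_{\phi_2}$ and applying the open mapping theorem to the identity map. The main obstacle here is that we are now in quasi-normed rather than normed spaces, so I cannot invoke the classical open mapping theorem verbatim; I expect this to be the one genuinely delicate point. I would resolve it either by appealing to the open mapping theorem for quasi-Banach (more generally, complete metrizable topological vector) spaces, using completeness of $w\ell^{p_2}_{\phi_2}$ established in the preceding proposition, or by giving a direct closed-graph-style argument showing $\|\cdot\|_{w\ell^{p_1}_{\phi_1}}\lesssim\|\cdot\|_{w\ell^{p_2}_{\phi_2}}$. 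The remaining verifications—that $|||\cdot|||$ is a quasi-norm and that $(w\ell^{p_2}_{\phi_2},|||\cdot|||)$ is complete—are routine and parallel the strong-type argument.
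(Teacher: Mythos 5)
Your proposal is correct and follows essentially the same route as the paper's own proof: the same solve-for-$\gamma$ device (bounding the density ratio raised to $\frac1{p_1}-\frac1{p_2}$ by $1$) for (i)$\Rightarrow$(ii), the same test on characteristic sequences via Lemma \ref{lemma:characteristic-weak} for (ii)$\Rightarrow$(i), and the same auxiliary quasi-norm $|||\cdot|||$ with the Open Mapping Theorem in its quasi-Banach form for (iii)$\Rightarrow$(ii). The one point you flag as delicate is resolved in the paper exactly as you anticipate, by invoking the quasi-Banach version of the Open Mapping Theorem.
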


\begin{proof}
As before, we can prove first that (i) and (ii) are equivalent. Suppose that (i) holds.
Let $x \in  w\ell^{p_2}_{\phi_2}$, $m\in\mathbb{Z}$,
$N\in \omega$, and $\gamma >0$. By definition, we have
$$\frac{\gamma}{\phi_2(2N+1)}\left( \frac{\big|\{k\in S_{m,N}:\ |x_k| >\gamma\}\big|}
{|S_{m,N}|}\right)^\frac1{p_2}\leq \|x\|_{w\ell^{p_2}_{\phi_2}},$$
or, assuming that $|\{k\in S_{m,N}:\ |x_k|>\gamma\}|\not=0$,
$$\frac{\gamma}{\phi_2(2N+1) } \leq\left( \frac{\big|\{k\in S_{m,N}:\ |x_k| >\gamma\}
\big|}{|S_{m,N}|}\right)^{-\frac1{p_2}} \|x\|_{w\ell^{p_2}_\phi }.$$
Therefore, we obtain
\begin{align*}
\frac{\gamma}{\phi_1(2N+1)}\left( \frac{\big|\{k\in S_{m,N}:\ |x_k| >\gamma\}\big|}
{|S_{m,N}|}\right)^\frac1{p_1} &\le \frac{C\gamma}{\phi_2(2N+1)}\left(
\frac{\big|\{k\in S_{m,N}:\ |x_k| >\gamma\}\big|}{|S_{m,N}|}\right)^\frac1{p_1}\\
&\leq C\left( \frac{\big|\{k\in S_{m,N}:\ |x_k| >\gamma\}\big|}{|S_{m,N}|}
\right)^{\frac1{p_1}-\frac1{p_2}}  \|x\|_{w\ell^{p_2}_\phi }\\
&\leq C\|x\|_{w\ell^{p_2}_\phi },
\end{align*}
for some $C>0$ independent of $m, N$, and $\gamma$. Note that the inequality still holds when
$|\{k\in S_{m,N}:\ |x_k| >\gamma\}\big|=0$. Taking the supremum over
$m\in\mathbb{Z}$, $N\in \omega$, and $\gamma>0$, we obtain
$\|x\|_{w\ell^{p_1}_\phi }\le C\,\|x\|_{w\ell^{p_2}_\phi}$. Thus (ii) holds.

Next suppose that (ii) holds. Let $m_0\in\mathbb{Z}$, $N_0 \in \omega$, and
$\xi^{m_0,N_0}$ be the characteristic sequence defined by \eqref{eq:characteristic}
in Lemma \ref{lemma:characteristic}. Then we have $\|\xi^{m_0,N_0}\|_{w\ell^{p_1}_{\phi_1}} \le
C_1\,\|\xi^{m_0,N_0}\|_{w\ell^{p_2}_{\phi_2}}$ for some $C_1>0$ independent of $m_0$ and $N_0$.
By Lemma \ref{lemma:characteristic-weak}, we have
$$\frac{1}{2\phi_1(2N_0+1)} \leq \|\xi^{m_0,N_0}\|_{w\ell^{p_1}_{\phi_1}}\quad \text{and}
\quad \|\xi^{m_0,N_0}\|_{w\ell^{p_2}_{\phi_2}} \le \frac{C_2}{\phi_2(2N_0+1)}$$
for some $C_2>0$ independent of $m_0$ and $N_0$. We conclude that
$$\frac{1}{\phi_1(2N_0+1)} \le \frac{C}{\phi_2(2N_0+1)},\quad \text{or equivalently,}
\quad \phi_2 (2N_0+1)\le C\,\phi_1(2N_0+1),$$
for some $C>0$ independent of $N_0$. This means that (i) holds, since $N_0\in \omega$ is arbritary.

Since (ii) also implies (iii), it now remains to prove that (iii) implies (ii).
Let $x\in w\ell_{\phi_{2}}^{p_{2}}$. Then $\|x\|_{w\ell_{\phi_{2}}^{p_{2}}}<\infty$,
and by assumption we also have $\|x\|_{w\ell_{\phi_{1}}^{p_{1}}}<\infty$.
Define $|||x|||:=\|x\|_{w\ell_{\phi_{1}}^{p_{1}}}+\|x\|_{w\ell_{\phi_{2}}^{p_{2}}}\
(x\in w\ell_{\phi_{2}}^{p_{2}})$. Note that $|||\cdot|||$ is a quasi-norm on $w\ell_{\phi_{2}}^{p_{2}}$
and one may observe that $(w\ell_{\phi_{2}}^{p_{2}},|||\cdot|||)$ is a quasi-Banach space.

Next, analogous to the proof of Theorem \ref{strongequivalence}, we consider the identity mapping
$I:(w\ell_{\phi_{2}}^{p_{2}},|||\cdot|||)\rightarrow (w\ell_{\phi_{2}}^{p_{2}},\|\cdot\|_{w\ell_{\phi_{2}}^{p_{2}}})$.
If $x^{(n)}\rightarrow x$ in $(w\ell_{\phi_{2}}^{p_{2}},|||\cdot|||)$, then $x^{(n)}\rightarrow x$
in $(w\ell_{\phi_{2}}^{p_{2}},\|\cdot\|_{w\ell_{\phi_{2}}^{p_{2}}})$ as well, since $\|\cdot\|_{w\ell_{\phi_{2}}^{p_{2}}}\leq
|||\cdot|||$. Thus $I$ is a continuous linear operator. Now $(w\ell_{\phi_{2}}^{p_{2}},|||\cdot|||)$
is a closed subspace of $(w\ell_{\phi_{2}}^{p_{2}},\|\cdot\|_{\ell_{\phi_{2}}^{p_{2}}})$.
Since the Open Mapping Theorem also holds for quasi-Banach spaces, it follows that $I$ is open.
As $I$ is bijective, $I^{-1}$ is continuous and hence bounded. Thus there exists $C>0$ such that
$|||x|||\leq C\|x\|_{w\ell_{\phi_{2}}^{p_{2}}}$ for every $x\in w\ell_{\phi_{2}}^{p_{2}}$. Therefore,
\[
\|x\|_{w\ell_{\phi_{1}}^{p_{1}}}\leq|||x|||\leq C\|x\|_{w\ell_{\phi_{2}}^{p_{2}}}
\]
for every $x\in w\ell_{\phi_{2}}^{p_{2}}$, and we are done.
\end{proof}

\medskip

\subsection*{Acknowledgement} The work was initiated during the second author's
visit to Department of Mathematics, Bandung Institute of Technology (Math-ITB) in 2014
and completed during the third author's postdoctoral program at Math-ITB in 2016.
The first author is supported by ITB Research and Innovation Program 2016.


\begin{thebibliography}{00}
\bibitem{Adams}
D.R. Adams, A note on Riesz potentials, \emph{Duke Math. J.}
{\bf 42} (1975), 765--778.
	
\bibitem{Chiarenza}
F. Chiarenza and M. Frasca, Morrey spaces and Hardy-Littlewood
maximal function, \emph{Rend. Mat.} {\bf 7} (1987), 273--279.

\bibitem{Gunawan}
H. Gunawan, A note on the generalized fractional integral
operator, \emph{J. Indones. Math. Soc.} {\bf 9} (2003), no. 1,
39--43.

\bibitem{GHLM}
H. Gunawan, D.I. Hakim, K.M. Limanta, and A.A. Masta, Inclusion
properties of generalized Morrey spaces, \emph{Math. Nachr.} (2016),
DOI: 10.1002/mana.201500425.

\bibitem{Kov}
O. Kovrizhkin, On the norms of discrete analogues of convolution operators,
\emph{Proc. Amer. Math. Soc.} {\bf 140} (2012), 1349--1352.

\bibitem{MSW}
A. Magyar, E.M. Stein, and S. Wainger,
Discrete analogues in harmonic analysis: spherical averages,
\emph{Ann. of Math.} {\bf 155} (2002), 189--208.

\bibitem{Nakai}
E. Nakai, Hardy-Littlewood maximal operator, singular integral operators,
and the Riesz potentials on generalized Morrey spaces,
\emph{Math. Nachr.} \textbf{166} (1994), 95--103.

\bibitem{Ob}
D.M. Oberlin, Two discrete fractional integrals, \emph{Math. Res. Lett.}
{\bf 8} (2001), 1--6.

\bibitem{Os}
A. Osan\c{c}l\i ol, Inclusions between weighted Orlicz spaces,
\emph{J. Inequal. Appl.} {\bf 2014}:390 (2014), 8 pp.

\bibitem{Raf}
H. Rafeiro, N. Samko, and S. Samko, Morrey-Campanato spaces: an overview,
\emph{Oper. Theory Adv. Appl.} {\bf 228} (2013), 293--323.

\bibitem{Sawano}
Y. Sawano and H. Tanaka, Morrey space for non-doubling measures,
\emph{Acta Math. Sinica} {\bf 21} (2005), no. 6, 1535--1544.

\bibitem{SW1}
E.M. Stein and S. Wainger, Discrete analogues of singular Radon
transform, \emph{Bull. Amer. Math. Soc.} {\bf 23} (1990), 537--544.

\bibitem{SW2}
E.M. Stein and S. Wainger, Discrete analogues in harmonic analysis I:
$\ell^2$ estimates for singular Radon transforms, \emph{Amer. J.
Math.} {\bf 21} (1999), 1291--1336.

\bibitem{SW3}
E.M. Stein and S. Wainger, Discrete analogues in harmonic analysis II:
fractional integration, \emph{J. d'Analyse Math.} {\bf 80} (2000), 335--355.

\bibitem{SW4}
E.M. Stein and S. Wainger, Two discrete fractional integral operators
revisited, \emph{J. d'Analyse Math.} {\bf 87} (2002), 451--479.

\end{thebibliography}
\end{document}